\let\oldmarginpar\marginpar
\renewcommand\marginpar[1]{\oldmarginpar[\raggedleft\footnotesize #1]%
{\raggedright\footnotesize #1}}
\theoremstyle{plain}
\newtheorem{theorem}{Theorem}[section]
\newtheorem{corollary}[theorem]{Corollary}
\newtheorem{lemma}[theorem]{Lemma}
\newtheorem{prop}[theorem]{Proposition}
\newtheorem*{no-num-theorem}{Theorem}
\theoremstyle{definition}
\newtheorem{define}[theorem]{Definition}
\newtheorem{remark}[theorem]{Remark}
\newcommand{\GG}{{\mathbb{G}}}
\newcommand{\cut}{{\backslash \backslash}}
\newcommand{\guts}{{\rm{guts}}}
\newcommand{\vol}{{\rm{vol}}}
\begin{document} 

\title[Volume and geometry of homogeneously adequate knots]{Volume and geometry of \\homogeneously adequate knots} 
\author[P.~Bartholomew]{Paige Bartholomew} 
\author[S.~McQuarrie]{Shane McQuarrie}
\author[J.~Purcell]{Jessica S. Purcell}
\author[K.~Weser]{Kai Weser}

\address{Mathematics Department, Brigham Young University, Provo, UT 84602, USA}


\begin{abstract}
We bound the hyperbolic volumes of a large class of knots and links, called homogeneously adequate knots and links, in terms of their diagrams.  To do so, we use the decomposition of these links into ideal polyhedra, developed by Futer, Kalfagianni, and Purcell.  We identify essential product disks in these polyhedra.
\end{abstract}

\maketitle 

\section{Introduction}\label{sec:intro}

A knot or link in $S^3$ is often represented by a diagram: a planar 4--valent graph with over--under crossing information.  Results from hyperbolic geometry tell us that the link complement is often hyperbolic \cite{thurston:bulletin}, and invariants of the hyperbolic geometry, such as the volume, are knot and link invariants \cite{mostow, prasad}. 
%
However, it is a nontrivial problem to bound the volume of a hyperbolic knot in terms of its diagram.  To date, this has been done for many classes of knots.
%
Lackenby found conditions that bound the volume of alternating links based on their diagrams \cite{lackenby:volume-alt}.  Purcell \cite{purcell:volumes}, and Futer, Kalfagianni, and Purcell found volume bounds of many additional classes of links \cite{fkp:filling, fkp:conway, fkp:coils}.  Most recently, they found bounds on the volumes of a class of links called semi-adequate links (defined in Definition~\ref{def:adequate}) in terms of their diagrams \cite{fkp}.  These links include alternating links, and many additional classes of links.

Much of the work in \cite{fkp} was extended to an even larger class of links, called homogeneously adequate links.  However, the step of bounding volumes in terms of diagrams was not completed in that paper.  It was shown that volumes could be bounded below by the Euler characteristic of a graph coming directly from the diagram, but that a certain quantity must be subtracted from this result.  This quantity, called $||E_c||$ in that paper, was shown to have relations to diagrams for semi-adequate links, but the connection was not generalized to homogeneously adequate links.

In this paper, we investigate this quantity for homogeneously adequate links.  We prove that it is bounded in terms of properties of the diagram.  Thus we can extend many results on volumes from \cite{fkp} to homogeneously adequate links.

Why homogeneously adequate links?  These links, defined carefully below, are known to include nearly all classes of knots and links mentioned above.  In particular, they include alternating links, positive and negative closed braids, closed 3--braids, Montesinos links, and homogeneous links.  They include links that are known not to be semi-adequate, including the closed 5--braid shown in Figure~\ref{fig:ex-homog} in this paper.  See \cite[Remark~1.5]{fkp:qsf}.  
At the time of this writing, it is unknown whether \emph{every} hyperbolic link admits a homogeneously adequate diagram.  See \cite{ozawa} for a detailed discussion.  


\subsection{Organization}

In Section~\ref{sec:knots}, we review the definition of homogeneously adequate links, and the polyhedral decomposition obtained for these links in \cite{fkp}.  In Section~\ref{sec:tentacles}, we review some of the results of \cite{fkp} that we will need in this paper, including a few of the arguments using a combinatorial proof technique that we call ``tentacle chasing''.  We prove the main technical result in Section~\ref{sec:EPDs}.  This leads to bounds on the quantity $||E_c||$ in terms of properties of the diagram.  The results of that section are applied to volumes in Section~\ref{sec:applications}.  

\subsection{Acknowledgements}

This work was partially supported by NSF grant DMS--1252687, and by funding for student research from Brigham Young University's College of Physical and Mathematical Sciences.  


\section{Properties of Knots}\label{sec:knots}

In this section, we review the definitions of semi-adequate and homogeneously adequate knots and links.  We also describe the polyhedral decomposition developed in Chapters 2 and 3 of \cite{fkp}.  In order to do so, we need to review a few other definitions.  Thus this section is particularly heavy with definitions and notation.  We attempt to include enough examples to make this paper as self--contained and readable as possible.  However, in the interest of space and readability, we must omit many of the details in \cite{fkp}, including the geometric ideas that led to the development of these polyhedra.  For more information on these ideas, and more information on the polyhedra and their other uses, see, for example, \cite{fkp:survey}, in addition to \cite{fkp}.

Given a crossing of a knot, there are two ways to resolve the crossing, called the $A$--resolution and the $B$--resolution.  See Figure \ref{fig:resolutions}.

\begin{figure}
\includegraphics{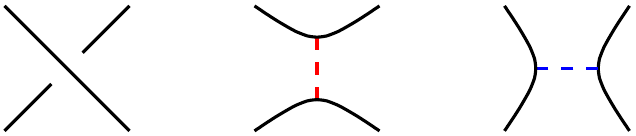}
\caption{From left to right: an unresolved crossing, the $A$--resolution, the $B$--resolution.}
\label{fig:resolutions}
\end{figure}

\begin{define}\label{def:state}
A choice of $A$-- or $B$--resolution for each crossing of the diagram is a \emph{state}, which we will denote by $\sigma$.  After applying $\sigma$ to the diagram, we obtain a crossing--free diagram consisting of a collection of closed curves called \emph{state circles}, which we denote by $s_\sigma$.  In Figure~\ref{fig:resolutions}, there is a dashed edge running between state circles in each resolution.  We will add these edges to the state circles in $s_\sigma$, obtaining a 3--valent graph, which we denote by $H_\sigma$.  The edges added to go from $s_\sigma$ to $H_\sigma$ we will call \emph{segments}.
\end{define}

\begin{define}\label{def:StateGraph}
The \emph{state graph} $\GG_\sigma$ is obtained from $H_\sigma$ by collapsing each state circle to a vertex.  Hence edges of $\GG_\sigma$ are segments of $H_\sigma$.  The \emph{reduced state graph} $\GG_\sigma'$ is obtained from $\GG_\sigma$ by removing all multiple edges between pairs of vertices.
\end{define}

For an example of a diagram and graphs $H_\sigma$, $\GG_\sigma$, and $\GG_\sigma'$ for the case $\sigma$ is the all--$A$ state, see Figure~\ref{fig:ex-state}, part of which appeared in \cite{fkp:survey}.  For an example of a diagram and $H_\sigma$ for a more general state, see Figure~\ref{fig:ex-homog}.

\begin{figure}
  \includegraphics{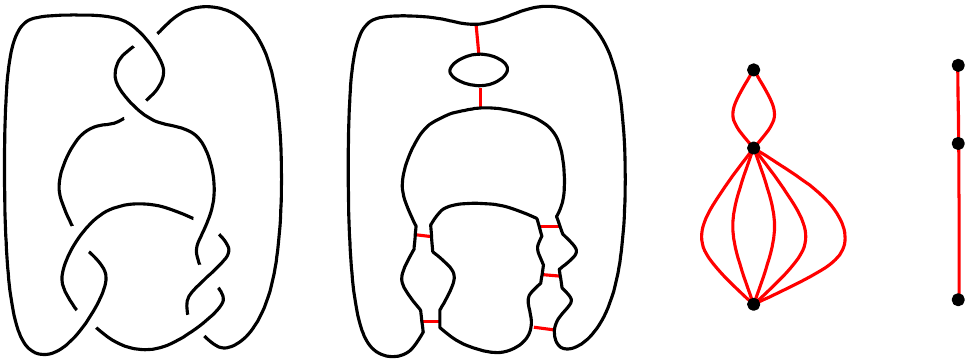}
  \hspace{.2in}
  \includegraphics{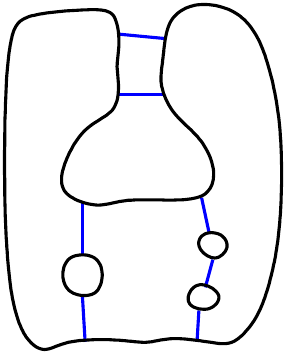}
  \caption{Left to right: A diagram, the graph $H_A$ (i.e.\ the state $\sigma$ is the all--$A$ state), the graph $\GG_A$, the graph $\GG_A'$, and the graph $H_B$ (i.e.\ the state is the all--$B$ state)}
  \label{fig:ex-state}
\end{figure}


\begin{define}\label{def:adequate}
A diagram $D(K)$ is called \emph{$\sigma$--adequate} if $\GG_\sigma$ has no one--edge loops.  That is, there is no edge of $\GG_\sigma$ connecting a vertex to itself.

A knot or link is called \emph{semi--adequate} if it admits a diagram for which the all--$A$ or all--$B$ state $\sigma$ is adequate.
\end{define}

Note that the link in Figure~\ref{fig:ex-state} is $A$--adequate, but not $B$--adequate, because a segement at the top of the graph of $H_B$ connects a state circle to itself.  

Semi-adequate links appeared first in the study of Jones--type invariants \cite{lick-thistle, thi:adequate}, and consist of a broad class of links.  These links were the main focus of study in \cite{fkp}.  However, that paper also studied broader classes of links as below, which are the focus of this paper. 

\begin{define}\label{def:homogeneous}
The state circles $s_\sigma$ cut the plane into regions, and segments are placed in these regions to form $H_\sigma$.  We say that a state $\sigma$ is \emph{homogeneous} if for any region in the complement of $s_\sigma$, all segments in that region in $H_\sigma$ come from $A$--resolutions, or all segments in that region come from $B$--resolutions.

A knot or link is \emph{homogeneous} if it admits a diagram and a state $\sigma$ that is homogeneous.
\end{define}

Note that the all--$A$ and all--$B$ states will always be homogeneous, since they are composed exclusively of $A$--resolutions or $B$--resolutions, respectively.  Figure~\ref{fig:ex-homog} shows a diagram of a link and a state $\sigma$ that is homogeneous, but more general than the all--$A$ or all--$B$ state.

\begin{figure}
  \includegraphics{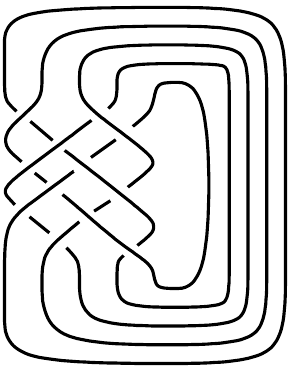}
  \hspace{.5in}
  \includegraphics{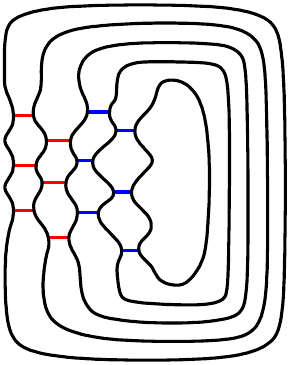}
  \caption{A link (represented as a closed 5--braid) and $H_\sigma$ for a homogeneous state $\sigma$.}
  \label{fig:ex-homog}
\end{figure}

\begin{define}\label{def:HomoAdequate}
A \emph{homogeneously adequate link} is a link that admits a diagram and a choice of state $\sigma$ that is both adequate and homogeneous.
\end{define}

In the examples above, the all--$A$ state of Figure~\ref{fig:ex-state} is both adequate and homogeneous, and the state $\sigma$ shown in Figure~\ref{fig:ex-homog} is also adequate and homogeneous.  Hence these are both homogeneously adequate links.  


In \cite{fkp}, the following theorem was obtained bounding volumes of homogeneously adequate links.

\begin{theorem}[Theorem~9.3 of \cite{fkp}]\label{thm:fkpVolumes}
Let $K$ be a link with a prime diagram that admits a homogeneously adequate state $\sigma$.  Then the volume of $S^3-K$ satisfies
\[
\vol(S^3-K) \geq v_8(\chi_-(\GG_\sigma') - ||E_c||),
\]
where $v_8 = 3.6638\dots$ is the volume of a hyperbolic regular ideal octahedron, and $\chi_-(\cdot)$ denotes the negative Euler characteristic, or $\max\{-\chi(\cdot),0\}$.  Finally, $||E_c||$ counts complex essential product disks in a spanning set.
\end{theorem}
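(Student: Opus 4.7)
The plan is to combine the guts--based volume estimate of Agol--Storm--Thurston with the polyhedral decomposition of $S^3 \setminus K$ reviewed in Sections~\ref{sec:knots}--\ref{sec:tentacles}. The state surface $S_\sigma$ associated to the homogeneously adequate state $\sigma$ will play the starring role: cutting $S^3-K$ along it produces a manifold $M_\sigma$ whose JSJ decomposition into guts and characteristic $I$--bundle can be read off from combinatorial data in the polyhedra.

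First, I would verify that $S_\sigma$ is essential in $S^3 \setminus K$. Adequacy of $\sigma$ (no one--edge loops in $\GG_\sigma$) rules out the most obvious compressing disks, and primality of the diagram handles boundary compressions. Once essentiality is in place, a standard application of Agol--Storm--Thurston, combined with Miyamoto's sharp bound of $v_8$ on volume per ideal vertex for hyperbolic manifolds with totally geodesic boundary, yields
\[
\vol(S^3-K) \;\geq\; v_8 \cdot \chi_-(\guts(M_\sigma)).
\]
This reduces the problem to a purely topological/combinatorial estimate of $\chi_-(\guts(M_\sigma))$.

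Second, I would compute $\chi_-(M_\sigma)$ combinatorially. A Mayer--Vietoris argument, using that $S_\sigma$ is built from a disk at each state circle and a twisted band at each segment, gives $\chi(M_\sigma)=\chi(\GG_\sigma)$. Pairs of parallel segments joining the same two state circles already form trivial product regions; passing to the reduced state graph $\GG_\sigma'$ absorbs exactly these contributions, so after these ``easy'' $I$--bundle pieces are removed, the effective negative Euler characteristic available to the guts drops only to $\chi_-(\GG_\sigma')$.

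The heart of the argument is identifying the remaining characteristic $I$--bundle of $M_\sigma$. Here I would pick a spanning set of essential product disks (EPDs) in the polyhedra, i.e.\ a maximal collection of pairwise non--parallel EPDs; the $I$--bundle is then parametrized by this set. The EPDs split into ``simple'' ones, already absorbed in the reduction $\GG_\sigma \to \GG_\sigma'$, and ``complex'' ones, counted by $||E_c||$, each of which forces an additional unit subtraction from $\chi_-(\guts(M_\sigma))$. Combining these contributions gives the stated inequality. The main obstacle is this final accounting step: one must show that the complex EPDs in a spanning set record exactly the additional $I$--bundle to subtract, without over--subtracting, so that the inequality survives. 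The tentacle--chasing arguments reviewed in Section~\ref{sec:tentacles} are designed to track how EPDs meet the polyhedral faces and how their boundaries interact with state circles and segments, and they would be the principal tool for carrying out that matching.
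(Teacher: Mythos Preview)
This theorem is not proved in the present paper; it is quoted from \cite{fkp} (labeled there as Theorem~9.3), and the current paper only \emph{uses} it. What the paper does supply, in Section~\ref{sec:applications}, is exactly the two ingredients you identify: the Agol--Storm--Thurston inequality $\vol(M)\geq v_8\,\chi_-(\guts(M\cut S))$ (equation~\eqref{eqn:AST}) and the identity $\chi_-(\guts(S^3\cut S_\sigma)) = \chi_-(\GG'_\sigma) - ||E_c||$ from \cite[Theorem~5.14]{fkp}. Your outline of how these combine is correct and matches the structure of the argument in \cite{fkp}.

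One point of confusion in your last paragraph: the tentacle--chasing lemmas of Section~\ref{sec:tentacles} are \emph{not} the tools used to establish the guts identity $\chi_-(\guts)=\chi_-(\GG'_\sigma)-||E_c||$. That identity is proved in \cite[Chapter~5]{fkp} by analyzing how the characteristic $I$--bundle intersects the polyhedra and constructing the spanning set $E_s\cup E_c$ (see \cite[Lemma~5.8]{fkp} and the surrounding discussion). The tentacle--chasing arguments in this paper serve a different purpose: they are used in Section~\ref{sec:EPDs} to prove Theorem~\ref{thm:main}, which constrains where complex EPDs can occur and thereby bounds $||E_c||$ diagrammatically. So the ``matching'' you describe as the main obstacle is already packaged into \cite[Theorem~5.14]{fkp} and does not need to be redone here; the present paper's contribution begins after Theorem~\ref{thm:fkpVolumes} is in hand.
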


The term $||E_c||$ will be defined carefully below.  For now, note that the term $\chi_-(\GG_\sigma)$ can easily be read off the prime, homogeneously adequate diagram of the link $K$.  It is more difficult to determine $||E_c||$.  Nevertheless, we show that $||E_c||$ is bounded by properties of the diagram of a homogeneously adequate link.  To do so, we will need to review the polyhedral decomposition in \cite{fkp}.

\subsection{Polyhedral Decomposition}

In \cite{fkp}, it was shown how to turn a homogeneously adequate diagram of a link into a collection of checkerboard ideal polyhedra.  Not all the details of the decomposition into polyhedra will be necessary here.  However, we will review properties of the polyhedra that we use later in this paper.

Given a state $\sigma$, recall that the state circles $s_\sigma$ cut the plane of projection into distinct regions.  We say a region is \emph{nontrivial} if it contains more than one state circle on its boundary.  In Figure~\ref{fig:alt-subdiagram}, an example of a graph $H_\sigma$ is shown on the left, as well as a nontrivial region, second from the left.

\begin{figure}
  \includegraphics{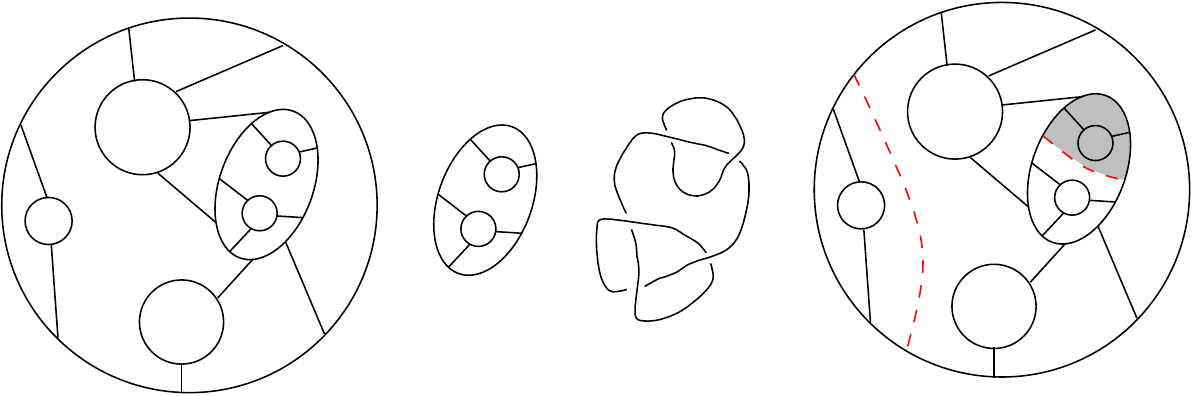}
  \caption{Left to right: A graph $H_\sigma$; a nontrivial region; the associated alternating sub-diagram (assuming the region is an $A$--region);
a maximal collection of non-prime arcs.  The shaded area indicates one polyhedral region (of four polyhedral regions total). }
  \label{fig:alt-subdiagram}
\end{figure}

Let $D(K)$ be a diagram and $\sigma$ a state such that $\sigma$ is homogeneously adequate.  For any nontrivial region in the complement of $s_\sigma$, one may form an alternating sub-diagram of $D(K)$ by starting with the state circles of $s_\sigma$ on the boundary of that region, and adding to the state circles the crossings associated with segments contained inside that region.  Figure~\ref{fig:alt-subdiagram}, third from left, shows an example.  Note that homogeneity of the diagram ensures that the result is alternating.  However, the resulting alternating subdiagram may not be prime.  

\begin{define}\label{def:prime}
Recall that a diagram is \emph{prime} if every simple closed curve meeting the diagram transversely in exactly two edges bounds a region of the diagram with no crossings.
\end{define}

Menasco described a polyhedral decomposition for prime alternating diagrams \cite{menasco:polyhedra}, which we wish to use.  If our alternating subdiagram is not prime, there is an arc in $H_\sigma$, running through the associated nontrivial region, with both endpoints on the same state circle $C$, which arc meets no other state circles, but splits the given region into two sub-regions, both containing state circles besides $C$.

\begin{define}\label{def:NonprimeArc}
An arc as above is called a \emph{non-prime arc}.  That is, it is an arc embedded in the complement of $H_\sigma$ with both endpoints on a state circle $C$, splitting a region of the complement of $s_\sigma$ into two sub-regions, both of which contain segments of $H_\sigma$.

We say a collection of non-prime arcs $\alpha_1, \dots, \alpha_n$ is \emph{maximal} if, for each nontrivial component in the complement of $s_\sigma$, the union of the arcs $\alpha_1, \dots, \alpha_n$ split the corresponding component of $H_\sigma$ into components, each of which contains segments, and moreover, the addition of any other non-prime arc yields such a component that does not contain segments. 
\end{define}

A maximal collection of non-prime arcs is shown in the above example, Figure~\ref{fig:alt-subdiagram}, right.

\begin{define}\label{def:PolyhedralRegion}
Let $\alpha_1, \dots, \alpha_n$ be a maximal collection of non-prime arcs.  A region in the complement of the union of the state circles $s_\sigma$ and the non-prime arcs $\bigcup_i \alpha_i$ is called a \emph{polyhedral region}.  
\end{define}

See Figure~\ref{fig:alt-subdiagram} for an example.  As in this example, typically we think of a polyhedral region as living in $H_\sigma$.  That is, add to the region the segments of $H_\sigma$.  Note that because $\sigma$ is a homogeneous state, all the segments will come from $A$--resolutions, or all of them will come from $B$--resolutions in the polyhedral region.

\begin{define}\label{def:LowerPolyhedron}
Given a polyhedral region, form an associated prime alternating diagram by taking circles corresponding to the boundary of the region and adding to these a crossing for each segment in the region.  Associated to any such alternating diagram is a checkerboard colored ideal polyhedron, obtained by taking the polyhedron with the diagram graph on its boundary, and vertices removed (at infinity).  This is the usual checkerboard colored ideal polyhedron associated with a prime alternating link, as described by Menasco \cite{menasco:polyhedra}.  We call it the \emph{lower polyhedron} associated with the polyhedral region.  
\end{define}

The lower polyhedra form the portions of our polyhedral decomposition that are easiest to understand, since they are standard polyhedra of alternating links.  They don't play much of a role in this paper.  There is one additional, more complicated polyhedron, the \emph{upper polyhedron}, which plays the most important role in this work. Although it is more complicated than the lower polyhedra, all the properties of the upper polyhedron can be read off of the graph $H_\sigma$.  We spend some time now describing the upper polyhedron in terms of $H_\sigma$.

The upper polyhedron consists of white faces and shaded faces.  White faces will be the components in the complement of shaded faces.  

The shaded faces are made up of three pieces, namely \emph{innermost disks}, \emph{tentacles}, and \emph{non-prime switches}.  We first define these three parts, then describe how they fit together.

\begin{define}\label{def:InnermostDisk}
Each trivial region in the complement of $s_\sigma$ is bounded by a single state circle.  We call a trivial region an \emph{innermost disk}.
\end{define}

\begin{define}\label{def:Tentacle}
A \emph{tentacle} is a portion of shaded face running adjacent to exactly one segment and state circle.  A tentacle is contained in a single polyhedral region.  If the polyhedral region is an $A$--region, meaning all segments come from $A$--resolutions, then the tentacles in that region have the shape of an $L$: The \emph{head}, or vertical portion of the $L$, lies against the segment, and the \emph{tail}, or horizontal portion of the $L$, lies against the state circle.  The tail terminates when it meets a segment lying on the same side of the state circle.  We say a tentacle in an $A$--region runs down and right, or right--down.  Figure~\ref{fig:PD1}, middle, shows right--down tentacles in an example.

If the polyhedral region is a $B$--region, then the tentacles in the region have the shape of a $\Gamma$: The \emph{head}, or vertical portion of the $\Gamma$, lies against a segment.  The \emph{tail}, or horizontal portion, runs adjacent to a state circle.  It terminates when it meets another segment on the same side.  We say a tentacle in a $B$--region runs down and left, or left--down.
\end{define}

\begin{define}\label{def:NonPrimeSwitch}
For any non-prime arc, a \emph{non-prime switch} is a portion of a shaded face forming a regular neighborhood of the non-prime arc in $H_\sigma$.
\end{define}

\begin{define}\label{def:UpperPolyhedron}
Starting with the graph of $H_\sigma$, form the \emph{upper polyhedron} as follows.
\begin{enumerate}
\item Remove bits of state circles at the top--right and bottom--left of $A$--segments and at the top--left and bottom--right of $B$--segments, as in Figure~\ref{fig:PD1}, left.
\item Add maximal collection of non-prime arcs $\alpha_1, \dots, \alpha_n$ (shown as dashed red lines in Figure~\ref{fig:PD1} left).
\item Color each innermost disk a unique color.
\item Add tentacles as follows: Tentacles exit state circles at gaps and flow right--down into $A$--regions or left--down into $B$--regions, and they are given the same color as the shaded face on the opposite side of the initial gap.  They terminate when they meet another segment. See Figure~\ref{fig:PD1}, middle.
\item Join tentacles by non-prime switches, merging two different colored faces into one, as in Figure~\ref{fig:PD1}, right.
\end{enumerate}
The above process defines the shaded faces of the upper polyhedron.  In \cite{fkp}, it was shown that the shaded faces are simply connected.  

The rest of the upper polyhedron consists of the following data.
\begin{itemize}
\item The white faces are the remaining, unshaded regions of the projection plane.  They are in one--to--one correspondence with regions in the complement of $H_\sigma \cup(\cup_i \alpha_i)$.
\item Ideal edges separate white and shaded faces.  
\item Ideal vertices are remnants of $H_\sigma$.  Note they zig-zag in $H_\sigma$, following portions of state circles and segments.  We often call them \emph{zig-zag vertices}.  
\end{itemize}
\end{define}

\begin{figure}
\begin{tabular}{ccc}
\includegraphics[width=1.7in]{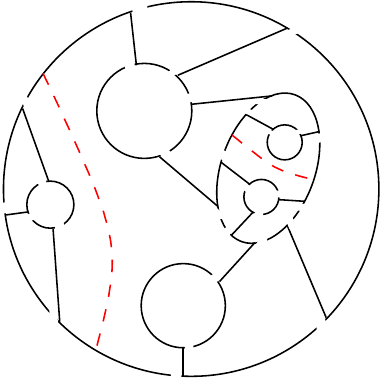} &
\includegraphics[width=1.7in]{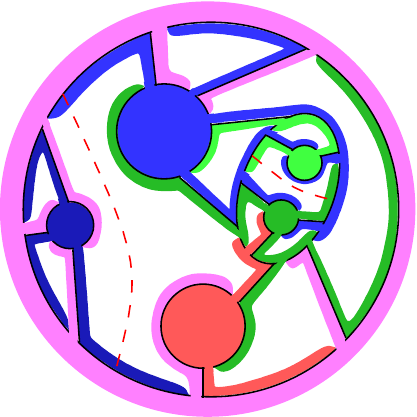} &
\includegraphics[width=1.7in]{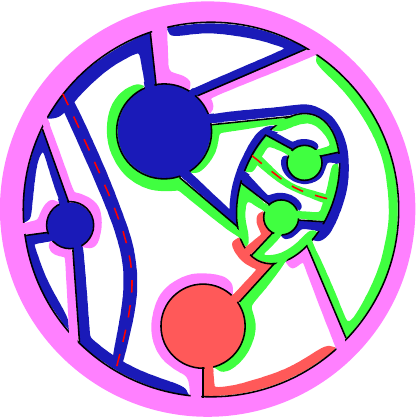}
\end{tabular}
\caption{Constructing shaded faces.
  Left: From $H_\sigma$, remove bits of state circles (solid lines) and add in non-prime arcs (dashed).  Middle: Color the innermost disks and extend tentacles.  Right: Add non-prime switches.}
\label{fig:PD1}
\end{figure}

In \cite{fkp}, it was shown that the upper and lower polyhedra give a decomposition of the manifold, denoted $M_\sigma$, obtained by cutting the knot complement along a particular incompressible surface, the \emph{state surface $S_\sigma$}.  For this paper, we do not need a description of that surface or the cut manifold $M_\sigma$, only the results that follow from the polyhedral decomposition, and so we omit the details here. 

A shaded face of the upper polyhedron determines a directed graph.  Innermost disks are sources, which correspond to vertices of the directed graph.  Tentacles can be represented by directed edges running from head to tail.  A non-prime switch is given by four directed edges, running in the direction of adjacent tentacles, and a vertex.  
Starting in Section~\ref{sec:tentacles}, we will frequently consider directed arcs inside of shaded faces, and think of them as running through the directed graph.  We need a couple of definitions to explain their direction.  We also want to ensure the arcs we consider are simplified, in the sense that they don't double back on themselves. These are the reasons for the following definitions.

\begin{define}\label{def:Downstream}
We say that an arc running monotonically through the directed graph of a shaded face is \emph{simple with respect to the shaded face}.  Any arc in a shaded face may be isotoped to be simple.  

A simple arc running through a tentacle is said to run \emph{downstream} if it runs in the direction of the tentacle, i.e.\ from head to tail.  Otherwise, it runs \emph{upstream}.  A simple arc running through a shaded face can only change direction, downstream to upstream, inside a non-prime switch.
\end{define}

We are now ready to define the complex essential product disks of Theorem~\ref{thm:fkpVolumes}.  We only need to describe the notion of an essential product disk in the upper polyhedron.  

\begin{define}\label{def:EPDinUpper}
In the upper polyhedron, an \emph{essential product disk (EPD)} is a properly embedded essential disk whose boundary meets distinct ideal vertices of the upper polyhedron (remnants of $H_\sigma$) exactly twice, and which lies completely in two colored faces otherwise.
\end{define}

\begin{define}\label{def:ComplexEPD}
An EPD in the upper polyhedron is said to be \emph{simple} if its boundary encloses a single white bigon face of the polyhedron.  It is said to be \emph{semi-simple} if its boundary encloses a union of white bigon faces to one side, and no other white faces to that side.  It is \emph{complex} otherwise.
\end{define}

\begin{define}\label{def:EPDSpan}
A finite collection of disjoint EPDs $E_1, \dots, E_n$ \emph{spans} a set $B$ if $B\setminus(E_1 \cup \dots \cup E_n)$ is a finite collection of prisms ($I$--bundles over a polygon) and solid tori. 
\end{define}

It was shown in \cite{fkp} that a characteristic $I$--bundle associated to the knot complement is spanned by EPDs completely contained in the ideal polyhedra of the polyhedral decomposition.  Moreover, there exists a set $E_s \cup E_c$ of EPDs in the upper polyhedron $P_\sigma$ that spans the intersection of the $I$--bundle with $P_\sigma$, \cite[Lemma~5.8]{fkp}.  Here $E_s$ consists of all simple disks in the upper polyhedron, and $E_c$ consists of complex disks.  The collection $E_c$ is minimal in the following sense.  

Suppose the boundary of an EPD runs adjacent to both sides of an ideal vertex without meeting that vertex. Then, by connecting the boundary across this vertex, and surgering the EPD, we obtain two new EPDs.  We say the two EPDs are obtained by a \emph{parabolic compression} of the original.  The collection $E_c$ is minimal in the sense that no EPD in $E_c$ parabolically compresses to a subcollection of $E_s \cup E_c$.  

\begin{define}\label{def:Ec}
The term $||E_c||$ in Theorem~\ref{thm:fkpVolumes} is the number of complex EPDs in the collection $E_c$.
\end{define}

We wish to bound $||E_c||$ in terms of the diagram.

\section{Useful Lemmas for Tentacle Chasing}\label{sec:tentacles}

Our goal now is to find EPDs in the upper polyhedron.  In order to do so, we will use combinatorics of the graph $H_\sigma$ and combinatorics of the upper polyhedron, which we visualize as lying on the graph $H_\sigma$.  Many of our tools come from \cite{fkp}.  In this section, we review the main tools we will be using.

The first set of results come from \cite[Chapter~3]{fkp}.  In that paper, these lemmas are called Escher Stairs, Shortcut Lemma, Downstream Lemma, and the Utility Lemma.  The proofs in that paper were originally written in the semi-adequate case rather than the homogeneously adequate case.  However, it was observed there that the proofs carried over into the homogeneously adequate case (see section 3.4 of that paper).  We state the results here for reference, and include brief proofs as a warm up to the combinatorial proof technique of tentacle chasing.  First, we need a definition.

\begin{define}\label{def:Staircase}
A \emph{staircase} is a connected subgraph of $H_{\sigma}$ consisting of finitely many segments alternating with portions of state circles, which has the following properties.  First, two segments with a state circle between them lie on opposite sides of the state circle.  Second, segments and state circles in the staircase are given directions, such that the graph runs monotonically from ``top'' to ``bottom.''  A \emph{step} consists of a segment and the portion of state circle in the direction of the segment.  If the segment lies in an $A$--region, then the state circle runs right from the end of the step.  If it lies in a $B$--region, it runs left.  See Figure~\ref{fig:Staircase}.
\end{define}

\begin{figure}
  \includegraphics{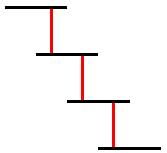} \hspace{.3in}
  \includegraphics{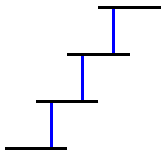} \hspace{.3in}
  \includegraphics{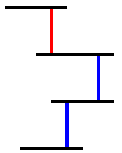}
  \caption{Left to right: A staircase with all segments in $A$--regions.  A staircase with all segments in $B$--regions.  A mixed staircase.}
  \label{fig:Staircase}
\end{figure}

\begin{lemma}[Escher stairs]\label{lemma:escher}
Let $K$ be a link with a diagram and a state $\sigma$ that is homogeneous and adequate.  In the graph $H_{\sigma}$, the following hold.
\begin{enumerate}
\item There is no staircase in which one state circle $C$ is both the top and the bottom of the staircase, with top and bottom steps on the same side of $C$.  
\item No staircase forms a loop, i.e.\ top and bottom steps lie on opposite sides of the same state circle.
\end{enumerate}
\end{lemma}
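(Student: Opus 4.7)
The plan is to proceed by contradiction using a minimal counterexample. Suppose some staircase $S$ violates either (1) or (2), and choose one with the smallest number of steps $n$. Label its segments $s_1,\dots,s_n$, where $s_i$ joins state circles $C_{i-1}$ and $C_i$, so that $C_0 = C_n = C$. By adequacy, no segment of $H_\sigma$ has both endpoints on a single state circle, so $n \ge 2$.

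I would first dispose of case (2) by a planarity argument. State circles are pairwise disjoint simple closed curves in the plane, and segments of $H_\sigma$ meet state circles only at their endpoints. Consequently, if $C_{j-1}$ and $C_j$ are both distinct from $C$, the segment $s_j$ joining them cannot cross $C$, and so $C_{j-1}$ and $C_j$ lie on the same side of $C$. Inducting along $S$ forces all intermediate $C_1,\dots,C_{n-1}$ to lie on one common side of $C$ unless some intermediate $C_i$ coincides with $C$. In the former situation, $s_1$ and $s_n$ both lie on that common side of $C$, contradicting the case-(2) hypothesis. In the latter, split $S$ at $C_i = C$ into two strictly shorter sub-staircases; by the rule that consecutive segments on a shared state circle lie on opposite sides, at least one sub-staircase is still a case-(1) or case-(2) counterexample, contradicting minimality of $n$. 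Hence the minimal counterexample must be of case (1), and a parallel splitting argument lets us further assume that $C$ appears only at the two endpoints of $S$.

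For the remaining case, $S$ lies on a single side of $C$ with its endpoints on $C$ and with $s_1, s_n$ on that side. Let $\alpha$ be the arc of $C$ whose union with $S$ bounds a disk $D$ in the plane, chosen to be innermost. Inside $D$, I would locate an innermost corner of $S$: a pair of consecutive segments $s_i, s_{i+1}$ separated by a portion of $C_i$, such that this portion of $\partial D$ can be cut off from the rest of $\partial D$ by a short arc in $D$ meeting no other state circles. By the staircase definition, $s_i$ and $s_{i+1}$ lie on opposite sides of $C_i$; by homogeneity, each of the two regions of $s_\sigma$ meeting this corner is uniformly of $A$-type or uniformly of $B$-type; and the tentacle-direction convention (right-down in $A$-regions, left-down in $B$-regions) combined with monotonicity of $S$ pins down the local picture essentially uniquely.

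The final step is to extract a contradiction from this local picture. Either the combinatorics force a segment of $H_\sigma$ inside $D$ whose two endpoints lie on a single state circle -- a one-edge loop in $\GG_\sigma$ contradicting adequacy of $\sigma$ -- or one can replace the corner $s_i, s_{i+1}$ by a shorter segment-and-arc path inside $D$, producing a strictly shorter case-(1) staircase on $C$ and contradicting minimality of $n$. I expect the main obstacle to be exactly this innermost-corner analysis when $S$ is mixed: homogeneity is precisely what guarantees that any proposed shortcut respects the staircase rules (alternating-side, monotone, and with top and bottom on the required side of $C$). Without homogeneity, the shortcut could fail to be a legitimate staircase, which is why the lemma requires homogeneity and not merely adequacy.
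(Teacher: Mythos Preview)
Your reduction of case~(2) to case~(1) via planarity is clean and correct, and roughly parallel to what the paper does (the paper also reduces (2) to (1), by connecting $C_n$ to some $C_j$ inside the closed curve). The genuine gap is your treatment of case~(1).

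Your ``innermost corner'' argument is not actually carried out: you assert that the local picture at such a corner must either produce a one-edge loop or permit a shortening of the staircase, but you give no mechanism for either. In particular, there is no reason an innermost corner of $S$ in $D$ should be adjacent to a segment of $H_\sigma$ violating adequacy, nor do you explain what the ``shorter segment-and-arc path'' replacing $s_i,s_{i+1}$ would be or why it would still be a staircase. You flag this yourself as the expected obstacle, and indeed it is one: nothing in the local corner data forces either alternative.

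The paper's argument for case~(1) is global, not local. The staircase together with an arc of $C$ bounds a closed curve~$\gamma$; each state circle $C_1,\dots,C_n$ contributes an arc inside~$\gamma$ whose two ends lie on~$\gamma$, and these arcs must close up in pairs inside~$\gamma$. If $n=2$, the only possibility is $C_1=C_2$, and the segment $s_2$ between them violates adequacy. For the inductive step, $C_n$ is forced to coincide with some $C_j$, and the sub-staircase from $C_j$ to $C_n$ is a strictly shorter case-(1) configuration. This pairing-up of state-circle arcs inside~$\gamma$ is the missing idea in your proposal.

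Finally, your closing paragraph misdiagnoses the role of homogeneity: the paper's proof uses only $\sigma$-adequacy (no segment has both endpoints on one state circle). Homogeneity plays no part in the Escher-stairs argument, so building your case analysis around it would lead you in the wrong direction.
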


See Figure \ref{fig:Escher}.

\begin{figure}
  \includegraphics{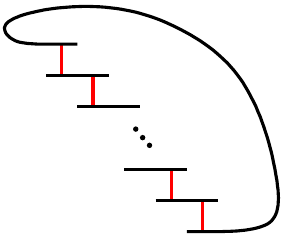} \hspace{.3in}
  \includegraphics{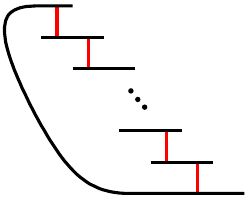}
\caption{Left: a right--down staircase forming a loop. Right: a right--down staircase with top and bottom on same side of the same state circle.}
\label{fig:Escher}
\end{figure}

\begin{proof}
Suppose by way of contradiction that such a staircase exists.  Notice that the segments and state circles of the staircase form a closed curve $\gamma$ in the diagram.  The state circles of the staircase intersect $\gamma$.  Since the state circles are also closed curves, they must connect to other state circles of the staircase on both sides of $\gamma$.  We show by induction on the number of stairs of the staircase that this situation inevitably violates the adequacy of the diagram.

Consider the first result: the staircase in which one state circle $C$ is both the top and bottom, with top and bottom segments on the same side of $C$.  Let $C_1, \dots, C_n$ denote the other state circles in the staircase.  If $n$ is odd, then not all state circles of the staircase can connect to each other, violating our observation above.  So $n$ must be even.  If $n=2$, $C_1$ and $C_2$ must be connected.  However, there is a segment of the staircase between $C_1$ and $C_2$, contradicting the adequacy of this state.  See Figure~\ref{fig:EscherS2}.  Therefore, there cannot exist a staircase with only three state circles in which one of the state circles is the top and bottom of the staircase.

\begin{figure}
  \includegraphics{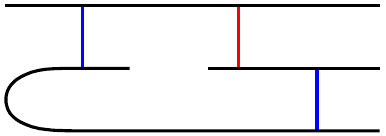} \hspace{.2in}
  \includegraphics{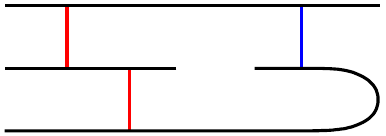}
\caption{Examples of staircases with three state circles, and top and bottom on the same state circle.}
\label{fig:EscherS2}
\end{figure}

We now proceed with the inductive step.  Assume by induction that it is impossible to form a staircase with top and bottom on the same step with $p$ state circles, for $p=2,3, \dots,n$.  We show that for $p=n+1$ the scenario is also impossible. Let the state circles be denoted $C,C_1,C_2,\dots,C_{n-1},C_n$, with $C$ at the top and bottom.  As before, we must connect the loose ends of the state circles $C_i$ enclosed by $\gamma$.  The state circle $C_n$ must connect to some state circle $C_j$ inside $\gamma$.  This connection cuts the staircase into sub-staircases, one of which is a staircase in which the state circle $C_n = C_j$ is the top and bottom of the staircase.  See Figure~\ref{fig:EscherI}.  Now the number of state circles in this sub-staircase is at most $n-2$.  This is impossible.

\begin{figure}
  \includegraphics{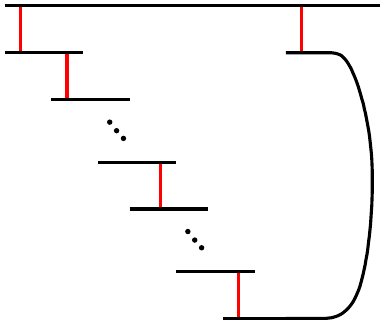} \hspace{.3in}
  \includegraphics{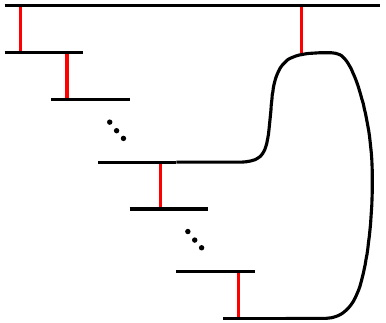}
\caption{Left: a staircase with $n+1$ state circles with $C$ at top and bottom.  Right: there exists a sub-staircase with $C_n$ at the top and bottom.}
\label{fig:EscherI}
\end{figure}

We now prove the second result of the lemma, that no staircase may form a loop.  Suppose that $C_1, \dots, C_n$ are state circles in a staircase forming a loop, with top and bottom segments on opposite sides of $C_1$.  The staircase forms a simple closed curve $\gamma$ in the plane.  Hence state circles $C_1, \dots, C_n$ must connect to each other on either side of $\gamma$.  Then $C_n$ must connect to some state circle $C_j$.  But now the sub-staircase from $C_j$ to $C_n$ has top and bottom segments on the same state circle $C_n=C_j$, on the same side.  This contradicts our previous result. 
\end{proof}

Recall that a non-prime arc has endpoints on a state circle $C$, and divides a region bounded by $C$ in the plane into two regions.  We say that each such region is a \emph{non-prime half--disk}.

\begin{lemma}[Shortcut lemma]\label{lemma:shortcut}
Let $\alpha$ be a non-prime arc with endpoints on a state circle $C$ in a diagram with homogeneous, adequate state $\sigma$.  Suppose a directed arc $\gamma$ lies entirely inside a single shaded face, and is simple with respect to that shaded face.  Suppose $\gamma$ runs across $\alpha$ into the interior of the non-prime half--disk bounded by $\alpha$ and $C$, and then runs upstream.  Finally, suppose that $\gamma$ exits the interior of that half--disk across the state circle $C$.  Then $\gamma$ must exit by following a tentacle downstream (that is, it cannot exit running upstream).
\end{lemma}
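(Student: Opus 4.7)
The plan is to argue by contradiction using Lemma~\ref{lemma:escher} (Escher stairs). Suppose $\gamma$ exits $C$ while running upstream; I will show that the trace of $\gamma$ in $H_\sigma$ between its crossing of $\alpha$ and its exit through $C$ produces a staircase whose top and bottom segments both sit on $C$, on the same (half-disk) side, violating Lemma~\ref{lemma:escher}(1).

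I begin at the two endpoints of the trace. Just after $\gamma$ crosses $\alpha$ at the non-prime switch, $\gamma$ is on a tentacle $T_1$ on the half-disk side of $\alpha$, traversed upstream. Upstream means moving tail-to-head, and $\gamma$ is moving away from the switch, so the tail of $T_1$ must terminate at the switch. The switch sits at an endpoint of $\alpha$ on $C$, so $T_1$'s tail runs along $C$ on the half-disk side from the switch back to the corner where the head begins; hence $T_1$'s head lies along a segment $s_1$ of $H_\sigma$ that is incident to $C$ on the half-disk side. Symmetrically, when $\gamma$ exits $C$ upstream through a gap, it is traversing a final tentacle $T_m$ tail-to-head and reaches $C$ at the head-gap of $T_m$; the head of $T_m$ therefore runs along a segment $s_m$ of $H_\sigma$ incident to $C$, again on the half-disk side.

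Between these endpoints $\gamma$ ascends a chain of tentacles $T_1, T_2, \dots, T_m$ upstream. Each passage from $T_i$ to $T_{i+1}$ through a gap on a shared state circle $C_i$ flips $\gamma$ from one side of $C_i$ to the other, because the shaded face swaps sides at the gap; consequently the head-segments $s_i$ and $s_{i+1}$ lie on opposite sides of $C_i$. The segments $s_1, \dots, s_m$ together with the intermediate arcs of $C_1, \dots, C_{m-1}$ thus form a staircase in $H_\sigma$ in the sense of Definition~\ref{def:Staircase}, ascending monotonically from $C$ at the bottom (tail of $T_1$) to $C$ at the top (head-gap of $T_m$).

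Both $s_1$ and $s_m$ are incident to $C$ on the half-disk side, so this staircase has $C$ as both its top and its bottom with top and bottom segments on the same side of $C$. Lemma~\ref{lemma:escher}(1) forbids precisely this configuration, yielding the desired contradiction and forcing $\gamma$ to exit $C$ along a tentacle traversed downstream. The delicate point in filling in the sketch is the treatment of any further non-prime switches $\gamma$ may cross en route: a non-prime arc is not a segment of $H_\sigma$ and so contributes no step to the staircase, and one must argue that each such switch is absorbed by the fact that its non-prime arc has both endpoints on a single state circle, so that the tentacles on either side of it still contribute head-segments respecting the alternation-of-sides property. Homogeneity of $\sigma$ inside the half-disk is what makes this local bookkeeping work uniformly.
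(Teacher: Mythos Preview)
Your core idea---use Escher Stairs to rule out an upstream exit by exhibiting a staircase with top and bottom segments both on $C$ on the half-disk side---is exactly the mechanism the paper exploits. But your execution has a genuine gap: you assert that between crossing $\alpha$ and exiting $C$ the arc $\gamma$ ``ascends a chain of tentacles $T_1, T_2, \dots, T_m$ upstream,'' and then build the staircase from the head-segments $s_1,\dots,s_m$. This assumes $\gamma$ runs purely upstream throughout, which is not justified. A simple arc can switch from upstream to downstream (for example upon reaching an innermost disk, a source of the directed graph), and can then switch back from downstream to upstream at an \emph{inner} non-prime switch. In such a scenario your chain $T_1,\dots,T_m$ is interrupted by downstream stretches, the alternation-of-sides property fails at the transitions, and no single staircase from $C$ back to $C$ is produced. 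Your closing paragraph acknowledges non-prime switches as a ``delicate point,'' but the fix you sketch---that the arc of an inner switch has both endpoints on one state circle, so the adjacent tentacles still alternate sides---addresses only the passive case where $\gamma$ runs through a switch without reversing direction. It does not handle a genuine downstream-to-upstream reversal at that switch.

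The paper closes this gap with an innermost-counterexample reduction: assume $\alpha$ is innermost among non-prime arcs for which the lemma fails. Then whenever $\gamma$ enters an inner non-prime half-disk it must exit that half-disk running downstream. Combined with the fact that simplicity preserves the downstream direction across ordinary state-circle crossings, this yields the key claim that once $\gamma$ turns downstream it \emph{stays} downstream all the way to $C$. Hence either $\gamma$ never turns downstream---and then your staircase really does form, contradicting Lemma~\ref{lemma:escher}(1)---or $\gamma$ turns downstream at some point and is forced to exit $C$ downstream. Either way the upstream exit is impossible. Your argument is missing precisely this reduction (or an equivalent device) to control inner direction reversals.
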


\begin{proof}
Consider an innermost counterexample.  That is, assume that $\gamma$ exits the non-prime half--disk bounded by non-prime arc $\alpha$ by running upstream, but if $\gamma$ enters into any other non-prime half--disk after crossing $\alpha$ and before exiting the half--disk bounded by $\alpha$, then it exits running downstream.  According to the hypothesis, upon crossing non-prime arc $\alpha$, $\gamma$ turns upstream through a tentacle.  Note that this tentacle may be in either an $A$ or $B$ region. Once $\gamma$ has begun running upstream, $\gamma$ may continue travelling in the following ways: (a) upstream, or (b) downstream, including the case that it runs into a non-prime half--disk.

For the first case, $\gamma$ continues upstream until it meets another state circle and encounters the same options of continuation. By Lemma~\ref{lemma:escher}, $\gamma$ may not exit $\alpha$ by continuing to run upstream.  Thus $\gamma$ must, at some point, run downstream.

Consider the second case. We claim that once $\gamma$ begins running downstream it must continue downstream until it exits the state circle $C$. If $\gamma$ crosses a state circle going downstream, it will still be running downstream in the new tentacle, because it is simple.  If $\gamma$ crosses into another non-prime half--disk it must eventually exit running downstream, even if it runs upstream at some point within the half--disk, by the assumption that $\alpha$ is the innermost counterexample.  The arc $\gamma$ may also run over a non-prime arc without entering the half--disk it bounds, but this results in no change of direction.

Therefore, in all cases, $\gamma$ must exit $\alpha$ by crossing $C$ by running downstream.
\end{proof}

Lemma~\ref{lemma:shortcut} implies the following, stronger lemma, whose proof we omit, but is contained in \cite{fkp}.

\begin{lemma}[Downstream]\label{lemma:downstream}
Let $\gamma$ be a directed arc lying in a single shaded face that is simple with respect to the shaded face.  Suppose $\gamma$ begins by crossing a state circle running downstream, and suppose that every time $\gamma$ crosses a non-prime arc, it exits the corresponding non-prime half--disk.  Then $\gamma$ defines a staircase such that every segment of the staircase is adjacent to $\gamma$, with $\gamma$ running downstream.  Moreover, the endpoints of $\gamma$ lie on tentacles adjacent to the first and last stairs.  
\end{lemma}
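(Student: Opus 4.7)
The plan is to trace $\gamma$ forward from its initial endpoint, recording at each state-circle crossing the segment against which the head of the current tentacle abuts, and to check that this sequence of segments, together with the intervening portions of state circles, forms a staircase in the sense of Definition~\ref{def:Staircase}.

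I would proceed by induction on the number of state circles $\gamma$ crosses. The first crossing is downstream by hypothesis, so $\gamma$ is riding a tentacle whose head abuts a segment $s_1$; this is the first stair, and the initial endpoint of $\gamma$ lies on that tentacle. Assume inductively that the first $k$ state-circle crossings have produced stairs $s_1,\dots,s_k$, each made downstream. Between the $k$-th and $(k+1)$-th crossing, $\gamma$ remains in one shaded face; by Definition~\ref{def:Downstream}, the only place a simple arc can flip from downstream to upstream is inside a non-prime switch, which requires $\gamma$ to cross a non-prime arc and enter the corresponding half-disk. Our hypothesis guarantees $\gamma$ exits that half-disk, and applying Lemma~\ref{lemma:shortcut} to an innermost non-prime half-disk $\gamma$ fully escapes, then iterating outward through any nesting, forces the eventual exit across the bounding state circle to be downstream. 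Hence the $(k+1)$-th crossing is also downstream, $\gamma$ rides a new tentacle whose head abuts a segment $s_{k+1}$, and $s_{k+1}$ is the next stair.

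It remains to check that $s_1,\dots,s_n$, together with the arcs of state circle lying between consecutive stairs, satisfy Definition~\ref{def:Staircase}. Two consecutive stairs lie on opposite sides of the state circle between them: the downstream tentacle $\gamma$ rides out of $s_i$ has its tail on one side of the intervening state circle $C$ and terminates at $s_{i+1}$ on that same side, so at the next crossing $\gamma$ leaves $s_{i+1}$ on the opposite side at the following state circle. Monotonicity from top to bottom is automatic since $\gamma$ never crosses a state circle upstream, and the endpoint claim follows tautologically from the construction. The main obstacle is taming $\gamma$'s possible detours through nested non-prime half-disks between two successive state-circle crossings --- this is precisely where the repeated, innermost-first application of Lemma~\ref{lemma:shortcut} is essential, for without it $\gamma$ could in principle emerge upstream and produce a spurious ``stair'' that would destroy the staircase structure.
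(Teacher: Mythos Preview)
The paper does not give its own proof of this lemma; it simply states that Lemma~\ref{lemma:shortcut} implies it and refers the reader to \cite{fkp}. Your inductive argument---tracing $\gamma$ across successive state circles and invoking the Shortcut Lemma (applied innermost-first through nested non-prime half-disks) to certify that each crossing remains downstream---is exactly the natural argument the paper is pointing to, and it is correct.

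One minor comment: your verification that consecutive stairs $s_i$ and $s_{i+1}$ lie on opposite sides of the intervening state circle is phrased a bit awkwardly. The cleaner way to say it is that after $\gamma$ runs down the head of the tentacle along $s_i$ and then along the tail on the state circle $C$, it crosses $C$ through a gap created by a segment $s_{i+1}$ attached to the \emph{opposite} side of $C$; the head of the next tentacle lies along $s_{i+1}$, so $s_i$ and $s_{i+1}$ are automatically on opposite sides of $C$. The right/left orientation of the step along $C$ then follows from the $L$-shape of tentacles in $A$-regions and the $\Gamma$-shape in $B$-regions, matching Definition~\ref{def:Staircase}. With that clarification, your proof is complete.
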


Finally, the following lemma is most useful for tentacle chasing arguments.

\begin{lemma}[Utility lemma]\label{lemma:utility}
Let $\gamma$ be a simple directed arc in a shaded face that starts and ends on the same state circle $C$. Then the following are true:
\begin{enumerate}
\item\label{item:utility1} $\gamma$ starts by running $upstream$ away from $C$,
\item\label{item:utility2} $\gamma$ terminates running $downstream$ toward $C$, and
\item\label{item:utility3} $\gamma$ cannot intersect $C$ more than two times.
\end{enumerate}
\end{lemma}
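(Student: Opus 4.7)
The plan is to establish the three claims in sequence: (1) directly via the staircase machinery, (2) from (1) by time-reversing $\gamma$, and (3) from (1) and (2) by considering sub-arcs between consecutive intersections of $\gamma$ with $C$.

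For (1), I would proceed by contradiction: suppose $\gamma$ starts at some $p\in C$ running downstream, with the goal of producing a descending staircase whose top and bottom both lie on $C$, which is forbidden by Lemma~\ref{lemma:escher}. If $\gamma$ never enters a non-prime half--disk, Lemma~\ref{lemma:downstream} applies immediately and yields such a staircase, since the endpoints of $\gamma$ lie on tentacles adjacent to the first and last stairs and both endpoints lie on $C$. If $\gamma$ does enter non-prime half--disks, I would combine Lemma~\ref{lemma:shortcut} with the observation from Definition~\ref{def:Downstream} that direction changes are confined to non-prime switches: whenever $\gamma$ enters a non-prime half--disk while running downstream, either it stays downstream throughout (and hence exits downstream), or it switches to upstream inside the half--disk at a non-prime switch, in which case Lemma~\ref{lemma:shortcut} forces the eventual exit across the state circle to be downstream. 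Shortcutting across each half--disk in this way, the hypotheses of Lemma~\ref{lemma:downstream} are satisfied and once again produce the forbidden $C$-to-$C$ staircase.

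For (2), I would consider the reverse arc $\bar\gamma$ obtained by traversing $\gamma$ from end to start. It is again a simple directed arc in the same shaded face with both endpoints on $C$, and at each point its direction is opposite to that of $\gamma$, so ``downstream'' and ``upstream'' are swapped under reversal. Applying part (1) to $\bar\gamma$, it must start by running upstream; this says precisely that $\gamma$ terminates running downstream toward $C$.

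For (3), I would suppose $\gamma$ meets $C$ at three or more points and pick an interior intersection point $q$, splitting $\gamma$ into sub-arcs $\gamma_1$ (from the start of $\gamma$ to $q$) and $\gamma_2$ (from $q$ to the end of $\gamma$). Each $\gamma_i$ is a simple directed arc in the shaded face with both endpoints on $C$, so parts (1) and (2) apply. Part (2) gives that $\gamma_1$ approaches $q$ running downstream, while part (1) gives that $\gamma_2$ leaves $q$ running upstream; hence $\gamma$ changes direction from downstream to upstream at $q$. But an interior crossing of $C$ by a simple arc must occur through a gap of $C$, along which a single tentacle flows with a fixed direction, and no non-prime switch is encountered there. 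By Definition~\ref{def:Downstream}, a simple arc can only change from downstream to upstream inside a non-prime switch, giving the desired contradiction. The main obstacle I anticipate is the shortcut step in (1): careful book-keeping is needed to verify that after repeatedly invoking Lemma~\ref{lemma:shortcut} at each non-prime half--disk, the resulting shortcut arc genuinely fits the hypotheses of Lemma~\ref{lemma:downstream} and determines a descending staircase with top and bottom on $C$.
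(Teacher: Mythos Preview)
Your proposal is correct and follows essentially the same approach as the paper: contradiction via the Downstream and Escher Stairs lemmas for (1), orientation reversal for (2), and sub-arcs between consecutive intersections for (3). The paper's version is a bit more streamlined---it applies the Downstream lemma directly in (1) without the Shortcut detour, and in (3) it gets the contradiction immediately by applying (1) to the second sub-arc (which would then start downstream at the interior point, violating (1)), rather than invoking the non-prime switch observation.
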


\begin{proof}
We will consider three results individually and prove each by contradiction.

For item~\eqref{item:utility1}, assume that $\gamma$ begins by running downstream, contrary to the claim. By Lemma~\ref{lemma:downstream}, there exists a staircase starting on $C$ such that $\gamma$ runs downstream, adjacent to each segment of the staircase.  Since $\gamma$ starts and ends on $C$, the staircase must start and end on $C$.  This contradicts Lemma~\ref{lemma:escher} (Escher Stairs).

For item~\eqref{item:utility2}, we know $\gamma$ begins at $C$ running upstream.  Assume that $\gamma$ terminates at $C$ running upstream.  By reversing the orientation of $\gamma$, we obtain the same scenario as in part \eqref{item:utility1}.  Therefore, $\gamma$ must terminate running downstream.

For item~\eqref{item:utility3}, assume $\gamma$ crosses $C$ more than once, at points $x_1$, $x_2$, $\dots$, $x_n$. Consider the sub arc of $\gamma$ from $x_1$ to $x_2$. By item~\eqref{item:utility2}, the subarc must cross $C$ going downstream at $x_2$. However, this means that for the subarc of $\gamma$ from $x_2$ to $x_3$, the arc travels downstream from $C$ at $x_2$, which contradicts item~\eqref{item:utility1}. Therefore, $\gamma$ may only cross $C$ two times.
\end{proof}

\section{Essential Product Disks}\label{sec:EPDs}

The following theorem, whose proof is contained in this section, is the main result of this paper.  It generalizes \cite[Theorem~6.4]{fkp}, which was proved only for $A$--adequate links.  Our proof is similar to the proof in \cite{fkp}, following the same sequence of steps.  However we need to include more cases when we allow both $A$ and $B$ resolutions in homogeneously adequate diagrams.  

\begin{theorem}\label{thm:main}
Let D(K) be a prime, $\sigma$--adequate $\sigma$--homogeneous diagram of a link $K$ in $S^3$.  Denote by $P_\sigma$ the upper polyhedron in the prime polyhedral decomposition described above.  Suppose there is an essential product disk $E$ embedded in $P_\sigma$. Then there is a 2--edge loop in $\mathbb{G}_{\sigma}$ so that $\partial E$ runs over tentacles adjacent to segments of the 2--edge loop.
\end{theorem}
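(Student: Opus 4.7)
The plan is to analyze the boundary of the EPD $E$, which consists of two arcs $\gamma_1, \gamma_2$, each lying in a distinct shaded face $F_1, F_2$ of the upper polyhedron, meeting at exactly two ideal (zig-zag) vertices $v_1$ and $v_2$. Each such vertex $v_i$ is a remnant of $H_\sigma$ adjacent to a specific segment $e_i$, and bounded at its ends by two state circles $C_i^{t}, C_i^{b}$ (the endpoints of $e_i$ in $\GG_\sigma$). The conclusion of the theorem is exactly the assertion that $\{C_1^t, C_1^b\} = \{C_2^t, C_2^b\}$, so that $e_1$ and $e_2$ together form a 2-edge loop in $\GG_\sigma$, and that $\gamma_1, \gamma_2$ each run along a tentacle adjacent to one of these segments. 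The overall strategy mirrors the proof of \cite[Theorem~6.4]{fkp} for the $A$-adequate case, but we must enlarge the case analysis to accommodate $B$-resolutions.

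First I would isotope each $\gamma_i$ so that it is simple with respect to its shaded face $F_i$, in the sense of Definition~\ref{def:Downstream}. Near each endpoint $v_j$, the arc leaves through a tentacle head adjacent to $e_j$, and hence is forced to run adjacent to one of the two state circles $C_j^t$ or $C_j^b$ upon entering or leaving the vicinity of $v_j$. This records, for each of the four endpoints of $\gamma_1, \gamma_2$, a direction (upstream vs.\ downstream) and an adjacent state circle. The main tool is the Utility Lemma (Lemma~\ref{lemma:utility}): if a simple arc in a shaded face begins and ends on the same state circle $C$, it starts upstream, ends downstream, and crosses $C$ at most twice. Combined with the Downstream Lemma (Lemma~\ref{lemma:downstream}) and the Shortcut Lemma (Lemma~\ref{lemma:shortcut}), this controls the possible trajectories of $\gamma_1$ and $\gamma_2$ once their starting directions at $v_1, v_2$ are fixed. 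I would then enumerate the cases by which state circle each arc runs along at each of its ends. In each case, either the endpoints of $e_1$ and $e_2$ match up — yielding the desired 2-edge loop with $\partial E$ running along tentacles adjacent to $e_1$ and $e_2$ — or we can extend the arcs by the tentacles they share with $e_1, e_2$ to build a closed curve in $H_\sigma$ that runs adjacent to a monotonically directed staircase, contradicting the Escher Stairs Lemma (Lemma~\ref{lemma:escher}).

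The main obstacle, and the point where the homogeneous case requires genuinely new work beyond \cite[Theorem~6.4]{fkp}, is that an ideal vertex $v_i$ may be adjacent to an $A$-segment in one shaded face and a $B$-segment in the other, and either shaded face $F_j$ may pass through non-prime switches that merge $A$- and $B$-colored tentacles. Thus the local picture at $v_1, v_2$ has four possibilities (AA, AB, BA, BB) instead of the single (AA) configuration in the semi-adequate setting, and the direction of a tentacle head (right-down in an $A$-region versus left-down in a $B$-region) must be tracked separately. I expect each mixed case to reduce, via an application of the Shortcut Lemma at any non-prime switch the arc traverses, to a staircase argument that is structurally identical to the $A$-adequate case but with segments oriented in the opposite direction; this is where the statements of Lemma~\ref{lemma:escher}, Lemma~\ref{lemma:utility}, and Definition~\ref{def:Staircase} in their full homogeneous form become essential. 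Assembling these subcases into a single proof, and checking that no new configuration introduced by $B$-regions escapes the staircase obstruction, is the bookkeeping that constitutes the bulk of the argument.
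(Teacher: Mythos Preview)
Your proposal contains a genuine gap rooted in a misreading of what the theorem asserts. You claim that the conclusion is exactly that $\{C_1^t, C_1^b\} = \{C_2^t, C_2^b\}$, i.e.\ that the segments $e_1, e_2$ adjacent to the two ideal vertices of $E$ themselves form the 2--edge loop. That is not what is being proved: the 2--edge loop may lie anywhere along $\partial E$, not necessarily at the vertices. In the paper's classification (Proposition~\ref{prop:EPDTypes}), types $\mathcal{C}$ through $\mathcal{G}$ are precisely the cases in which the white side of the square cuts off a vertex \emph{not} containing a segment of the 2--edge loop; the loop is instead found along the staircase between the two white sides, possibly on the far side of a non-prime arc.

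Consequently your dichotomy in the second paragraph fails. When the segments at $v_1$ and $v_2$ do not bound a 2--edge loop, you do not obtain a closed curve contradicting Escher Stairs; rather, the arcs $\gamma_1, \gamma_2$ traverse a (possibly long) staircase between the vertices, and the 2--edge loop must be located somewhere along that staircase by tracking where the two arcs run adjacent to \emph{distinct} segments joining the same pair of state circles (this is the content of Lemmas~\ref{lemma:AdjacentLoop}, \ref{lemma:AdjacentPoints}, \ref{lemma:StaircaseBetween}, and \ref{lemma:FullStaircase}). The paper also first pulls $E$ off the ideal vertices into a normal square (Lemma~\ref{lemma:EPDtoSquare}) so that each end lies in a white face of a definite polyhedral region; this is what makes the clockwise/counter-clockwise map to a lower polyhedron available when both white faces lie in the same region (Lemma~\ref{lemma:SameRegion}), a reduction your outline never invokes. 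Your local endpoint analysis---direction and adjacent state circle at each of the four arc-ends---is therefore not enough to locate the loop; the substantive work is the global staircase-tracking and the hanging/standing region analysis between the two ends, which your plan does not address.
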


The $A$--adequate versions of many of the following lemmas appear in \cite[Chapter~6]{fkp}.  We begin with a lemma which allows us to work with normal squares in the upper polyhedron, rather than EPDs.  This is the analogue to \cite[Lemma~6.1]{fkp}, and its proof is identical.

\begin{lemma}[EPD to oriented square]\label{lemma:EPDtoSquare}
Let $D(K)$ be a prime, $\sigma$--adequate and $\sigma$--homogeneous diagram of a link in $S^3$.  Let $P_\sigma$ denote the upper polyhedron.  Suppose there is an EPD $E$ properly embedded in $P_\sigma$.  Then $\partial E$ can be pulled off the ideal vertices of $P_\sigma$ to give a normal square $E'$ in $P_\sigma$ with the following properties.
\begin{enumerate}
\item Two opposite edges of $\partial E'$ run through shaded faces, which we label green and orange.
\item The other two opposite edges run through white faces, each cutting off a single vertex of the white face.
\item The single vertex cut off in a white face forms a triangle in that white face, oriented in such a way that in counter-clockwise order, the edges of the triangle are colored orange--green--white.
\end{enumerate}
Finally, the two white edges of the normal square cannot lie on the same white face of the polyhedron.
\end{lemma}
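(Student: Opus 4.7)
The plan is a standard normalization of the EPD $E$, followed by a short essentiality argument. Let $\partial E$ meet ideal vertices $v_1$ and $v_2$ of $P_\sigma$, and let green and orange denote the two shaded faces of $P_\sigma$ containing $\partial E$. Thus $\partial E$ is a bigon with corners $v_1, v_2$, one green side, and one orange side. First I would push $\partial E$ off each $v_i$ individually. Each ideal vertex of the checkerboard-style polyhedron $P_\sigma$ has incident faces alternating shaded and white in cyclic order, so at $v_1$ the two adjacent white faces sit on opposite sides of $\partial E$. I pick one of them and replace the transition at $v_1$ by a short arc in that white face; together with portions of the two ideal edges emanating from $v_1$, the short arc cuts off a single triangular corner of the chosen white face. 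Repeating at $v_2$ produces $\partial E'$, a normal square whose four edges are one green arc, one orange arc, and two white arcs, each cutting off a single vertex of the corresponding white face. This verifies items (1) and (2).

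For item (3), the orientation is inherited from the plane of projection, so every face of $P_\sigma$ carries a canonical counter-clockwise boundary orientation. At $v_1$ the four incident faces appear in counter-clockwise cyclic order as $(\text{green}, W, \text{orange}, W')$, where $W$ is the white face into which the push-off at $v_1$ was made. Traversing the boundary of the corner-triangle inside $W$ counter-clockwise, the three edges are, in order: the ideal edge between $W$ and orange, then the ideal edge between $W$ and green, then the new push-off arc lying in $W$. Labelling each ideal edge by the non-white face across it, and calling the push-off arc \emph{white}, the cyclic color sequence reads orange--green--white, as claimed; after possibly interchanging the labels green and orange at $v_2$, the same holds at the second triangle.

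Finally, I would show that the two white edges of $\partial E'$ lie in distinct white faces. Suppose instead that both lay in a single white face $W$. Then $\partial E'$ meets $W$ in two disjoint arcs, each cutting off a single corner of $W$; these two corners together with the two white arcs bound a sub-disk $D\subset W$. A surface isotopy of $\partial E'$ across $D$ removes both white arcs at once, producing a simple closed curve that lies entirely in green $\cup$ orange and meets fewer than two ideal vertices of $P_\sigma$. This curve still bounds a disk in the 3-ball $P_\sigma$, so $E$ would be isotopic to a disk with fewer than two vertex crossings, contradicting the definition of an EPD together with the essentiality of $E$.

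I expect this last paragraph to be the main obstacle, as the contradiction with essentiality must be phrased carefully in the language of the homogeneously adequate polyhedral decomposition, and a sub-case where the two corners of $W$ happen to coincide must be ruled out using $v_1 \ne v_2$. However, since the lemma statement asserts that its proof is identical to the $A$-adequate case \cite[Lemma~6.1]{fkp}, the combinatorial argument should transfer verbatim once one notes that the shaded/white alternation around any ideal vertex of $P_\sigma$ is independent of whether the incident segments arise from $A$-- or $B$--resolutions.
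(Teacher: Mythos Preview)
Your construction for items (1) and (2) matches the paper's proof. For item (3) you are over--arguing: the paper simply observes that at each ideal vertex there are two adjacent white faces, and pushing into one gives the triangle orientation orange--green--white while pushing into the other gives green--orange--white; one then \emph{chooses} the correct side at each vertex independently. Your phrase ``after possibly interchanging the labels green and orange at $v_2$'' is not quite right, since the colors are global labels of the shaded faces and cannot be swapped at one vertex only; what you presumably mean is ``after possibly choosing the other white face at $v_2$,'' which is exactly the paper's move.

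The real divergence is in the final claim that the two white arcs lie in distinct white faces. The paper does not argue by isotoping $\partial E'$ across a sub-disk of $W$; instead it takes one of the two shaded arcs of $\partial E'$ (say the green one), observes that both of its endpoints now lie on the same white face $W$, and joins them by a single arc inside $W$. The resulting closed curve has exactly one shaded side and one white side, i.e.\ it is a \emph{normal bigon}, and this contradicts \cite[Proposition~3.24]{fkp}, which says $P_\sigma$ admits no normal bigons. Your isotopy argument, by contrast, is not fully specified: the ``sub-disk $D$'' bounded by the two corner triangles and the two white arcs is not well defined as stated (the two arcs together with the two triangles do not obviously bound a single disk in $W$), and the conclusion that the resulting curve ``meets fewer than two ideal vertices'' would need a careful case analysis depending on how the two corner triangles sit inside $W$. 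The normal--bigon argument sidesteps all of this and is the intended route; I would replace your last paragraph with that.
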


\begin{proof}
The EPD $E$ runs over two shaded faces, labeled green and orange, and runs between them over two ideal vertices.  If we pull $\partial E$ off of the ideal vertex slightly, into one of the two white faces, the result will cut off a triangle.  The orientation is determined by which side of the vertex $\partial E$ is pulled into, and we choose the orientation that matches item (3) of the lemma.  The white faces must be distinct else the boundary of the normal square $E'$ on one of the shaded faces can be joined to an arc between the two white sides of $\partial E'$ to form a normal bigon, contradicting the fact that $P_\sigma$ admits no normal bigons \cite[Proposition~3.24]{fkp}.  
\end{proof}

In the above lemma, the term \emph{normal} is used without definition.  A precise definition is given in \cite[Definition~3.14]{fkp}.  However, we will only work with normal squares that have the form of Lemma~\ref{lemma:EPDtoSquare}.  

\begin{remark}\label{rmk:OrientedSquare}
When a normal square has the form of Lemma~\ref{lemma:EPDtoSquare}, note that the white faces will lie at the tails of orange tentacles and heads of green ones in the $A$--regions, but they will lie at the tails of green and heads of orange in the $B$--regions.  Examples of portions of these squares are shown in the figures in this section.  See, for example, Figure~\ref{fig:lollipops}.
\end{remark}

To prove Theorem~\ref{thm:main}, we start with an EPD and pull it into a normal square following Lemma~\ref{lemma:EPDtoSquare}, and then analyze such normal squares.  During the course of the proof, we will see that the first 2--edge loop near a white face of the square has one of seven possible forms, shown in Figures~\ref{fig:EPDformA} through \ref{fig:EPDformG}.  These are analogous to Types $\mathcal{A}$ through $\mathcal{G}$ in \cite[Figure~6.1]{fkp}, although more complicated behavior occurs here.  In that paper, all regions were $A$--regions; in this one, regions may switch from $A$--regions to $B$--regions and vice versa.  
We record the results in a proposition.

\begin{prop}\label{prop:EPDTypes}
Suppose $E$ is an EPD in a prime diagram of a link $K$ with an adequate, homogeneous state $\sigma$.  Then the corresponding normal square $E'$ of Lemma~\ref{lemma:EPDtoSquare} has boundary running over tentacles adjacent to a 2--edge loop.  Moreover, it has a white side whose nearest 2--edge loop has one of the forms in Figures~\ref{fig:EPDformA} through \ref{fig:EPDformG}, where in each general case, the dots indicate a staircase (with possibly no segments) of one of the forms of Lemma~\ref{lemma:FullStaircase}, shown in Figure~\ref{fig:staircases}.
\end{prop}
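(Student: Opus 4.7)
The plan is to take the EPD $E$, convert it to a normal square $E'$ via Lemma~\ref{lemma:EPDtoSquare}, and then analyze the combinatorial structure of $\partial E'$ near one of its two white sides. Near a white side, the triangle cut off is oriented orange--green--white counter-clockwise; by Remark~\ref{rmk:OrientedSquare}, this forces the white face to sit at specific ends of the orange and green tentacles depending on whether the surrounding region is an $A$--region (white at the tail of orange, head of green) or a $B$--region (white at the tail of green, head of orange). Starting from this triangle I would trace the green and orange arcs of $\partial E'$ outward through the directed graphs of their respective shaded faces; the immediate local picture either already exhibits a 2--edge loop in $\GG_\sigma$ that both arcs border, or it forces one or both arcs to continue down a tentacle, across a non-prime switch, or upstream through the graph.

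Next I would use the tools of Section~\ref{sec:tentacles} to bound these continuations. The Utility Lemma (Lemma~\ref{lemma:utility}) prevents either arc from crossing any state circle more than twice, and forces specific upstream/downstream behavior at the endpoints. The Downstream Lemma (Lemma~\ref{lemma:downstream}) converts any sustained run of downstream motion into a staircase, and the Escher Stairs lemma (Lemma~\ref{lemma:escher}) rules out the closed configurations that would arise if the two arcs could never come together. The overall skeleton of the proof is therefore: push outward from the white triangle, and argue that the orange and green arcs are forced to coincide along the tentacles adjacent to a common segment, producing a 2--edge loop in $\GG_\sigma$.

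The detailed case analysis is organized by the local picture at the white triangle and by the type of region ($A$-- or $B$--) on either side, together with whether a non-prime switch intervenes before the 2--edge loop is reached. In the pure $A$-adequate setting of \cite[Theorem~6.4]{fkp}, every tentacle points right--down and the seven configurations $\mathcal{A}$--$\mathcal{G}$ suffice. Here I would run the same enumeration, but at each branch additionally check the possibility that one of the arcs (or a staircase built from it via Lemma~\ref{lemma:downstream}) crosses from an $A$--region into a $B$--region across a non-prime arc. The Shortcut Lemma (Lemma~\ref{lemma:shortcut}) is what makes this tractable: an arc running upstream into a non-prime half--disk must exit downstream, so a mixed staircase is still pinned to one of the shapes listed in Lemma~\ref{lemma:FullStaircase} and shown in Figure~\ref{fig:staircases}.

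The main obstacle is precisely this bookkeeping across non-prime arcs and across the $A$/$B$ boundary: unlike the $A$--adequate case, both right--down and left--down tentacles can appear on $\partial E'$, and the staircase implicit in the ``dots'' of Figures~\ref{fig:EPDformA}--\ref{fig:EPDformG} may be mixed. I expect the argument to proceed by first reducing to the subcase where no non-prime arc is crossed between the white triangle and the first 2--edge loop, recovering essentially the seven forms of \cite{fkp}, and then showing that each non-prime switch encountered either produces one of Figures~\ref{fig:EPDformA}--\ref{fig:EPDformG} directly or can be absorbed into the ``dots'' of a mixed staircase via the Shortcut and Downstream lemmas. Once every branch of the case analysis terminates in a 2--edge loop of one of the listed forms, the proposition follows.
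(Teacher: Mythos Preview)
Your outline has the right spirit but contains two genuine misconceptions and omits a key structural step that the paper's proof relies on.

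First, the assertion that ``the staircase implicit in the `dots' \dots\ may be mixed'' is wrong. One of the main points of Lemma~\ref{lemma:FullStaircase} is precisely that these staircases are \emph{not} mixed: if $\beta_W$ lies in a hanging--$A$ or standing--$B$ region the staircase is entirely right--down (all $A$--segments), and if $\beta_W$ lies in a hanging--$B$ or standing--$A$ region it is entirely left--down (all $B$--segments). This purity is what makes the pairing argument in Lemma~\ref{lemma:TermStair} work, because it forces a color to appear on both sides of the terminal state circle, a contradiction. Relatedly, your remark about an arc crossing ``from an $A$--region into a $B$--region across a non-prime arc'' describes something that cannot happen: a non-prime arc lies inside a single region of the complement of $s_\sigma$, and by homogeneity all segments in that region have the same resolution type. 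The $A/B$ switch occurs only across state circles.

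Second, your plan is organized entirely around tracing outward from one white triangle, but the paper's proof is organized around the pair $(\beta_V,\beta_W)$ and a dichotomy you do not mention: whether the two white faces lie in the same polyhedral region. The same-region case (Lemma~\ref{lemma:SameRegion}) is handled not by tentacle chasing at all but by the clockwise/counter-clockwise map into the lower polyhedron, where the alternating theory supplies the 2--edge loop. In the different-region case the paper first classifies the local picture at each white side into hanging/standing types (Lemma~\ref{lemma:lollipops}), then connects the two ends by a staircase (Lemmas~\ref{lemma:StaircaseBetween} and \ref{lemma:FullStaircase}), and only then splits on whether a separating non-prime half--disk intervenes (Lemmas~\ref{lemma:TermStair} and \ref{lemma:InNonPrime}). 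Without the hanging/standing classification and the use of \emph{both} white sides as anchors, there is no evident mechanism in your sketch that forces the green and orange arcs ever to border a common pair of segments; the Escher, Utility, Downstream, and Shortcut lemmas constrain the arcs but do not by themselves produce the 2--edge loop.
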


\begin{figure}
  \begin{tabular}{cccc}
  Elementary & & General & \\
  \includegraphics{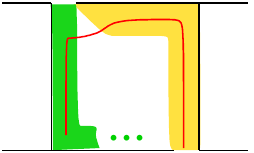} &
  \includegraphics{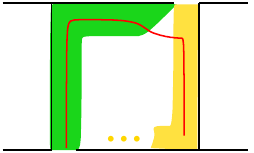} &
  \includegraphics{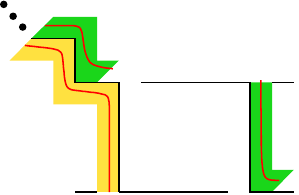} &
  \includegraphics{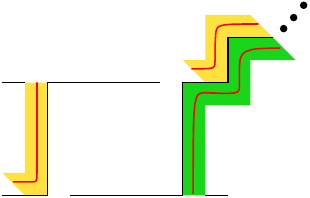} \\
  $A$--region &
  $B$--region &
  $A$ to $A$--region &
  $B$ to $B$--region \\
  \end{tabular}
  \caption{Type $\mathcal{A}$:  EPD runs over tentacles of different colors adjacent to 2--edge loop, white side cuts off a triangle with vertex (zig-zag) containing one of the segments of the loop.}
  \label{fig:EPDformA}
\end{figure}

\begin{figure}
  \begin{tabular}{cc}
    Elementary & General \\
    \begin{tabular}{cc}
      \includegraphics{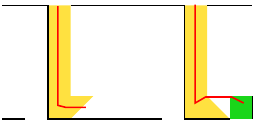} &
      \includegraphics{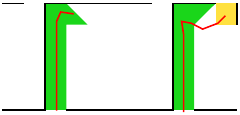} \\
      \includegraphics{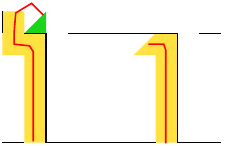} &
      \includegraphics{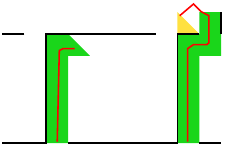} \\
      $A$--region & $B$--region
    \end{tabular}
    &
    \begin{tabular}{cc}
      \includegraphics{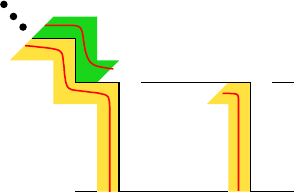} &
      \includegraphics{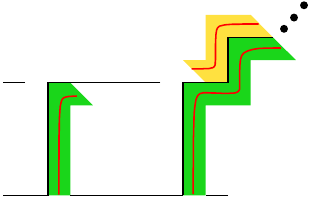} \\
      $A$ to $A$--region & $B$ to $B$--region
    \end{tabular}
  \end{tabular}
  \caption{Type $\mathcal{B}$: EPD runs over tentacles of same color on 2--edge loop, white side cuts of triangle with vertex (zig-zag) containing one segment in the loop.}
  \label{fig:EPDformB}
\end{figure}

\begin{figure}
  \begin{tabular}{cccc}
    Elementary & & General & \\
    \includegraphics{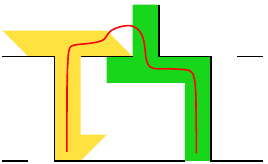} &
    \includegraphics{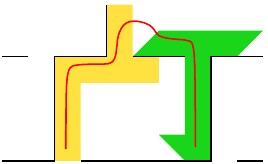} \hspace{.5in} &
    \includegraphics{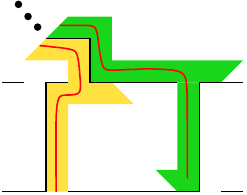} &
    \includegraphics{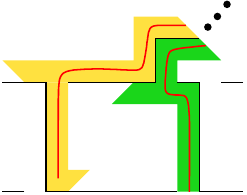} \\
    $A$--region & $B$--region & $A$ to $B$--region & $B$ to $A$--region \\
  \end{tabular}
  \caption{Type $\mathcal{C}$:  Tentacles of different color, white side cuts off vertex not containing a segment of 2--edge loop, not separated by non-prime arc.}
  \label{fig:EPDformC}
\end{figure}

\begin{figure}
  \begin{tabular}{cccc}
    Elementary & & General & \\
    \includegraphics{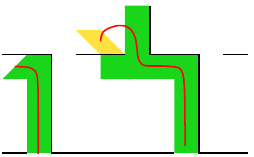} &
    \includegraphics{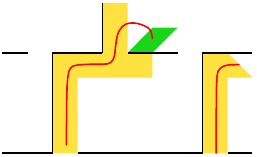} \hspace{.5in} &
    \includegraphics{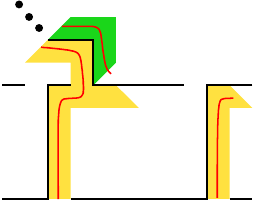} &
    \includegraphics{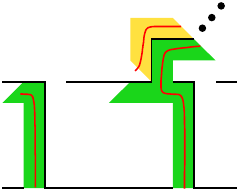} \\
    $A$--region & $B$--region & $A$ to $B$--region & $B$ to $A$--region 
  \end{tabular}
  \caption{Type $\mathcal{D}$:  Tentacles of same color, white side cuts off vertex not containing segment of 2--edge loop, and not separated from 2--edge loop by non-prime arc.}
  \label{fig:EPDformD}
\end{figure}

\begin{figure}
  \begin{tabular}{cc}
    \includegraphics{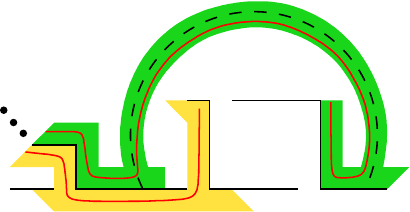} &
    \includegraphics{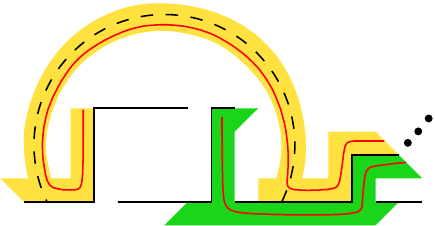} \\
    $A$--region & $B$--region
  \end{tabular}
  \caption{Type $\mathcal{E}$: Tentacles opposite color, white side separated from 2--edge loop by non-prime arc.}
  \label{fig:EPDformE}
\end{figure}

\begin{figure}
  \begin{tabular}{cc}
    \includegraphics{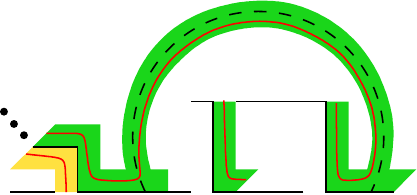} &
    \includegraphics{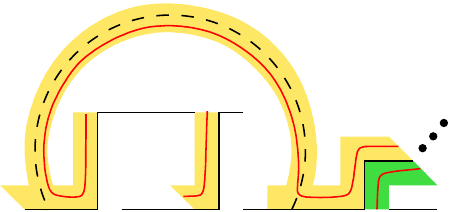} \\
    $A$--region & $B$--region
  \end{tabular}
  \caption{Type $\mathcal{F}$: Tentacles of same color, white side separated from 2--edge loop by non-prime arc on same side of state circle as vertex (zig-zag).}
  \label{fig:EPDformF}
\end{figure}

\begin{figure}
  \begin{tabular}{cccc}
    \includegraphics{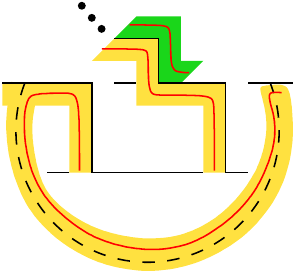} &
    \includegraphics{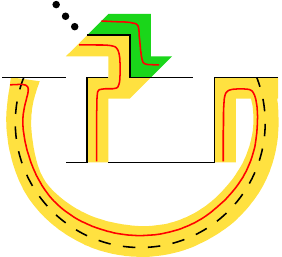} &
    \includegraphics{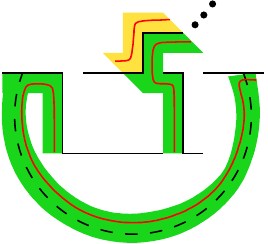} &
    \includegraphics{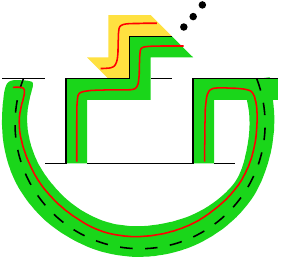} \\
    $A$ to $A$--region & $A$ to $B$--region & $B$ to $A$--region & $B$ to $B$--region
  \end{tabular}
  \caption{Type $\mathcal{G}$:  Tentacles of same color, white side separated from 2--edge loop by non-prime arc, non-prime arc on opposite side of state circle as vertex (zig-zag).}
  \label{fig:EPDformG}
\end{figure}

First, we deal with the case that the two white faces of the normal square both lie in the same polyhedral region.  This case is simpler, because we may map into the corresponding lower polyhedron using a map called the ``clockwise'' or ``counter-clockwise'' map, and then apply standard results on alternating diagrams.  We obtain the following.

\begin{lemma}[White faces in same region]\label{lemma:SameRegion}
Suppose $D(K)$ is a prime diagram with an adequate, homogeneous state $\sigma$ and corresponding upper polyhedron $P_\sigma$.  Suppose $E$ is an EPD in $P_\sigma$, and $E'$ is the corresponding normal square from Lemma~\ref{lemma:EPDtoSquare}, intersecting white faces $V$ and $W$ in arcs $\beta_V$ and $\beta_W$, respectively.  If $V$ and $W$ are in the same polyhedral region, then $\partial E'$ runs over tentacles of a 2--edge loop in the upper polyhedron of type $\mathcal{B}$, Figure~\ref{fig:EPDformB}.
\end{lemma}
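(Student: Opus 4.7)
The plan is to reduce to the alternating case using the clockwise/counter-clockwise map of \cite{fkp}. Since both white faces $V$ and $W$ lie inside a single polyhedral region $R$, the sub-diagram cut out by $R$ is a prime alternating diagram with an associated Menasco checkerboard polyhedron $Q_R$ --- the \emph{lower polyhedron} attached to $R$. The upper polyhedron $P_\sigma$ and $Q_R$ are glued along the white faces in $R$, and the clockwise map pushes arcs in a white face on the $P_\sigma$ side to the corresponding arcs on the $Q_R$ side. I would apply this map to the white-face arcs $\beta_V$ and $\beta_W$ of $\partial E'$, then replace the two shaded-face arcs of $\partial E'$ with arcs in the shaded faces of $Q_R$ having the same endpoints on $V$ and $W$; this yields a normal square $E''$ properly embedded in $Q_R$.

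Next I invoke the classification of normal squares in the checkerboard polyhedron of a prime alternating diagram: any such square whose two shaded arcs lie in distinct shaded faces and whose two white arcs lie in distinct white faces must bound a bigon, so that the two ideal vertices crossed by $\partial E''$ form a 2--edge loop in the state graph of the alternating sub-diagram. This is the standard fact used in \cite{lackenby:volume-alt} and \cite{fkp}, and it is what restricts EPDs in lower polyhedra to the simple type. Pulling this 2--edge loop back to $H_\sigma$, I obtain two segments of $H_\sigma$ sharing the same pair of state circles, both lying inside $R$. Unwinding the clockwise map sends each shaded arc of $\partial E''$ back to an arc of $\partial E'$ running along a tentacle adjacent to one of these segments, and the orientation convention in item (3) of Lemma~\ref{lemma:EPDtoSquare} forces both tentacles to lie on the same side of the 2--edge loop (hence to share a color) while also forcing a zig-zag vertex of the cut-off triangle to contain a segment of the loop. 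Combined with the two possibilities for $R$ being an $A$-- or a $B$--region, this produces exactly Type $\mathcal{B}$ as displayed in Figure~\ref{fig:EPDformB}.

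The main obstacle is verifying that the constructed square $E''$ is a genuine normal square in $Q_R$ rather than a degenerate object. Concretely, one must check that the arcs produced under the clockwise map do not get identified (which would violate the distinctness of $V$ and $W$ guaranteed by Lemma~\ref{lemma:EPDtoSquare}), and that the shaded arcs of $\partial E''$ do not thread through a non-prime switch. If they did, the associated 2--edge loop would cross a non-prime arc, placing us in one of Types $\mathcal{E}$, $\mathcal{F}$, $\mathcal{G}$; but those configurations have their two white faces in distinct polyhedral regions, so the hypothesis of the lemma rules them out. Once these two checks are secured, the alternating-case bigon classification together with the tentacle/zig-zag bookkeeping forces Type $\mathcal{B}$, and the lemma follows.
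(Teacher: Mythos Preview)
Your approach is essentially the paper's: map the square into the lower polyhedron via the clockwise (or counter-clockwise) map, use the alternating classification to find a 2--edge loop, and then read off the tentacle picture to identify Type~$\mathcal{B}$. Two small corrections are worth making. First, the alternating fact you invoke is not that every such normal square ``bounds a bigon''; rather, either the image square is inessential (in which case its two white arcs cut off the same ideal vertex with opposite orientations, forcing one of them onto a bigon face), or it is an EPD in the lower polyhedron, in which case \cite[Lemma~5.1]{fkp} gives the 2--edge loop directly. The paper treats both cases. Second, your worry about shaded arcs of $E''$ threading a non-prime switch is vacuous: the lower polyhedron $Q_R$ is the checkerboard polyhedron of a \emph{prime} alternating sub-diagram, and by construction a polyhedral region contains no non-prime arcs, so $Q_R$ has no non-prime switches. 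With those adjustments your argument matches the paper's proof.
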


The proof is nearly identical to that of \cite[Lemma~6.7]{fkp}.  We step through it briefly to pick up the form of the 2--edge loop.

\begin{proof}
Observe that since $V$ and $W$ are in the same polyhedral region and our diagram is $\sigma$--homogeneous, then the region will either have all $A$ or all $B$ resolutions. 

Apply the clockwise map of \cite[Lemma~4.8]{fkp} for an $A$--region, or the counter-clockwise map for a $B$--region (see also \cite[Lemma~3.2]{fkp:qsf} for both).  By the cited results, we obtain a square $S'$ in the lower polyhedron cutting off a single vertex in each white side.  This square is either inessential, meaning it bounds a single ideal vertex in the lower polyhedron, or it can be isotoped to an EPD in the lower polyhedron.
In the case that the square is essential, \cite[Lemma 5.1]{fkp} implies that $S'$ runs over two segments of $H_\sigma$ corresponding to a 2--edge loop.  In the case that $S'$ is inessential, its white arcs must cut off the same ideal vertex with opposite orientation, which implies one of the arcs lies on a bigon face.  In either case, $S'$ runs over segments of $H_\sigma$ corresponding to a 2--edge loop.  (See, e.g.\ \cite[Figure~6.3]{fkp}.)

When the region is an $A$ region, the proof of \cite[Lemma~6.7]{fkp} shows that $E'$, the original square, runs adjacent to a 2--edge loop as in the left of Figure~6.3 of that paper.  When the region is a $B$ region, the argument still applies, although the figure must be reflected and colors interchanged.  In either case, $\partial E'$ runs adjacent to two tentacles of the same color (orange for $A$, green for $B$), running upstream.  This is type $\mathcal{B}$, elementary, shown on the top left of Figure~\ref{fig:EPDformB}.
\end{proof}

The next two lemmas help us identify a 2--edge loop when the EPD runs adjacent to distinct points on a state circle.  We will apply them several times.

The first is identical to \cite[Lemma~6.8]{fkp}, both in its statement and its proof, using the fact that the Utility Lemma, Lemma~\ref{lemma:utility}, and the Shortcut Lemma, Lemma~\ref{lemma:shortcut}, both hold in the homogeneously adequate setting.

\begin{lemma}[Adjacent loop]\label{lemma:AdjacentLoop}
Let $\zeta$ be a directed simple arc contained in a single shaded face, adjacent to a state circle $C$ at a point $p$ on $C$. Suppose $\zeta$ runs upstream across a state circle $C'$ after leaving $p$, but then eventually continues on to be adjacent to $C$ again at a new point $p'$. Then $\zeta$ must run adjacent to two distinct segments of $H_{\sigma}$ connecting $C$ to $C'$.
\qed
\end{lemma}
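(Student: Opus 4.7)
My plan is to extract two distinct segments of $H_\sigma$ joining $C$ to $C'$ from $\zeta$'s two transverse crossings of $C'$, using the combinatorial tools of Section~\ref{sec:tentacles}. I first apply the Utility Lemma (Lemma~\ref{lemma:utility}) to $\zeta$ viewed as an arc with both endpoints on $C$; this forces $\zeta$ to leave $p$ running upstream and to arrive at $p'$ running downstream, and to meet any given state circle in at most two points. Because $C$ and $C'$ are disjoint simple closed curves, $p$ and $p'$ lie in the same complementary component of $C'$, so the hypothesized upstream crossing of $C'$ must be followed by a second crossing in order for $\zeta$ to return. Applying the Utility Lemma once more, now to the sub-arc of $\zeta$ whose endpoints lie on the two crossings of $C'$, this second crossing runs downstream.

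To pin down a segment of $H_\sigma$ adjacent to $\zeta$ and joining $C$ to $C'$ at each crossing, I reverse the orientation of the sub-arc of $\zeta$ running from the crossing back to the appropriate endpoint on $C$; the crossing of $C'$ that was upstream (respectively downstream) now begins the reversed sub-arc downstream. I then apply the Downstream Lemma (Lemma~\ref{lemma:downstream}) to obtain a staircase whose first step is adjacent to $C'$, whose last step is adjacent to $C$, and each of whose segments is adjacent to $\zeta$. The hypothesis of the Downstream Lemma concerning non-prime arcs is verified by invoking the Shortcut Lemma (Lemma~\ref{lemma:shortcut}): each time $\zeta$ enters a non-prime half-disk, the Shortcut Lemma forces it to exit running downstream.

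The main obstacle is to show that this descending staircase in fact has a single step, so that the segment in that step joins $C$ to $C'$ directly. If it had two or more steps, I would concatenate the upstream staircase entering the first crossing, the downstream staircase emerging from the second crossing, and the portion of $\zeta$ traversing the complementary disk of $C'$ that does not contain $C$. The resulting closed combinatorial configuration produces either a staircase loop or a staircase with top and bottom steps on the same side of a common state circle, either of which contradicts the Escher Stairs Lemma (Lemma~\ref{lemma:escher}). Once this one-step reduction is established, the two crossings of $C'$ each contribute a segment joining $C$ to $C'$. These two segments are distinct: if they agreed, $\zeta$ would pass through the same gap in $C'$ in two opposite directions, which together with the simplicity of $\zeta$ in its shaded face would force $p = p'$, contradicting the hypothesis.
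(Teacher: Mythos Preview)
The paper does not prove this lemma here; it simply cites \cite[Lemma~6.8]{fkp} and notes that the argument there carries over verbatim to the homogeneously adequate setting. So there is no in-paper proof to compare against, and I evaluate your sketch on its own.

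Your appeal to the Shortcut Lemma to check the non-prime hypothesis of the Downstream Lemma is a misreading: Lemma~\ref{lemma:shortcut} only constrains the \emph{direction} of exit from a non-prime half-disk, \emph{assuming} the arc exits across the bounding state circle; it does not force exit at all, which is exactly what Lemma~\ref{lemma:downstream} demands. Nothing in your argument supplies that hypothesis for the reversed sub-arcs you consider.

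More seriously, your ``main obstacle''---reducing each staircase to a single step---is only gestured at, and the gesture does not work. The concatenation you describe is not a staircase in the sense of Definition~\ref{def:Staircase}: one piece ascends from $C$ to $C'$, the other descends from $C'$ back to $C$, and the two top segments lie on the \emph{same} side of $C'$, so consecutive segments fail the opposite-sides condition. The middle portion of $\zeta$ beyond $C'$ is an arbitrary simple sub-arc, not a step. Neither forbidden configuration of Lemma~\ref{lemma:escher} is produced, and no contradiction follows. Without this reduction you have not exhibited even one segment joining $C$ to $C'$, let alone two. Your closing distinctness argument is also incomplete: a simple arc \emph{can} traverse the same tentacle once upstream and once downstream provided a non-prime switch intervenes (see Definition~\ref{def:Downstream}), and $p$, $p'$ can then be distinct points along the same tentacle tail on $C$, so equality of the two segments does not force $p = p'$.
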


The next lemma is identical to \cite[Lemma~6.9]{fkp}, also in its statement and proof.  We include a proof here that expands a bit on the first sentence of the proof there, for additional clarity.

\begin{lemma}[Adjacent points]\label{lemma:AdjacentPoints}
Suppose there are arcs $\zeta_1$ and $\zeta_2$ in distinct shaded faces in the upper polyhedron such that each $\zeta_j$ runs adjacent to two points $p_1$ and $p_2$ on the same state circle $C$ (i.e.\ $\zeta_1$ runs through a neighborhood of $p_1$ and a neighborhood of $p_2$, and similarly for $\zeta_2$).  Then either
\begin{enumerate}
\item at least one of $\zeta_1$ or $\zeta_2$ runs upstream across some other state circle and Lemma~\ref{lemma:AdjacentLoop} applies, or
\item both arcs remain adjacent to the same portion of $C$ between $p_1$ and $p_2$.
\end{enumerate}
\end{lemma}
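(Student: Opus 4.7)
The plan is to proceed by contradiction, assuming that both conclusion (1) and conclusion (2) fail. Failure of (2) provides, without loss of generality, an arc $\zeta_1$ whose sub-arc $\beta$ between its tentacle adjacencies at $p_1$ and $p_2$ leaves a regular neighborhood of the portion of $C$ joining $p_1$ to $p_2$ and eventually returns. Failure of (1) is the stronger hypothesis that no portion of $\zeta_1$ or $\zeta_2$ ever runs upstream across any other state circle between its adjacencies at $p_1$ and $p_2$.

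The first step is to enumerate precisely how $\beta$ can depart from the vicinity of $C$. Because $\beta$ lies in a single shaded face, and shaded faces are built from innermost disks, tentacles, and non-prime switches, the only ways for $\beta$ to leave the neighborhood of the relevant portion of $C$ are (a) to follow a tentacle head onto a segment of $H_\sigma$ incident to $C$ and continue along a tentacle whose tail lies on a neighboring state circle $C'$, or (b) to cross a non-prime arc incident to $C$ into the interior of a non-prime half--disk. Making this enumeration precise is the content that expands the first sentence of the corresponding proof in \cite{fkp}, and it is what makes the subsequent direction analysis tractable.

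The second step is direction analysis. By the failure of (1), $\beta$ never crosses a state circle upstream. The Shortcut Lemma (Lemma~\ref{lemma:shortcut}) then forces every entry of $\beta$ into a non-prime half--disk to be followed by a downstream exit, since an upstream exit would create exactly the forbidden upstream state--circle crossing. Consequently every transition of $\beta$ to a neighboring state circle is downstream, and in particular $\beta$'s first departure from $C$ is downstream. The Downstream Lemma (Lemma~\ref{lemma:downstream}), applied to the sub-arc of $\beta$ from this first departure to its first return to $C$, then produces a staircase with top step on $C$, along which $\beta$ runs downstream, and with bottom step also on $C$. This contradicts Escher Stairs (Lemma~\ref{lemma:escher}), whose two cases rule out exactly such staircases.

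The contradiction forces some crossing of $\beta$ to be upstream across a state circle $C'$, and Lemma~\ref{lemma:AdjacentLoop} then produces the two distinct segments connecting $C$ to $C'$ that conclusion (1) requires. I expect the main obstacle to be the bookkeeping around non-prime switches in the homogeneously adequate setting, where a single shaded face may span both $A$-- and $B$--regions. The Shortcut Lemma carries over unchanged to this setting, so the direction--chasing argument still closes, but the first-step enumeration must be executed carefully enough that no upstream motion is accidentally absorbed into a non-prime switch and thereby overlooked.
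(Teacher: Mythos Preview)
Your approach diverges from the paper's in an essential way: you never invoke primeness of the diagram, whereas the paper's argument relies on it. The paper splits into two cases. If some $\zeta_j$ crosses another state circle $C'$ between $p_1$ and $p_2$, the Utility Lemma (Lemma~\ref{lemma:utility}) forces the first such crossing to be upstream, yielding conclusion~(1) in one line. If neither $\zeta_j$ crosses any other state circle, then the sub-arcs of $\zeta_1$ and $\zeta_2$ between $p_1$ and $p_2$, joined by short arcs at $p_1$ and $p_2$, form a simple closed curve meeting $H_\sigma$ exactly twice on $C$; primeness of the diagram forces one side of this curve to contain no crossings, which pins both arcs to the same portion of $C$ and gives conclusion~(2).

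Your argument addresses only the first of these cases, and even there it essentially re-derives the content of the Utility Lemma via the Downstream Lemma and Escher Stairs rather than citing it. The genuine gap is the second case: failure of conclusion~(2) does \emph{not} force your sub-arc $\beta$ to cross another state circle. It is consistent with all the hypotheses that neither $\zeta_1$ nor $\zeta_2$ crosses any state circle other than $C$, yet the two run adjacent to \emph{different} arcs of $C$ between $p_1$ and $p_2$. In that scenario there is no downstream crossing for the Downstream Lemma to act on, Escher Stairs never engages, and your contradiction does not materialize. Primeness is exactly the tool that excludes this configuration, and the argument cannot close without it. As a smaller point, your invocation of the Shortcut Lemma is also miscalibrated: its hypothesis requires the arc to run upstream after entering the half-disk, which is precisely what you are assuming does not occur, so the lemma does not apply in the form you cite.
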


\begin{proof}
If one of $\zeta_1$, $\zeta_2$ crosses another state circle $C'$ between points $p_1$ and $p_2$ then it must do so running upstream, by the Utility Lemma, Lemma~\ref{lemma:utility}.  That is, if $\zeta_2$ crosses $C'$ after leaving $p_1$, then it must cross $C'$ again to reach $p_2$.  The Utility Lemma implies the first crossing is in the upstream direction.  Then we are in case (1), and Lemma~\ref{lemma:AdjacentLoop} applies.

So suppose neither $\zeta_1$ nor $\zeta_2$ crosses another state circle between points $p_1$ and $p_2$.  Then we form a simple closed curve meeting $H_\sigma$ exactly twice at $p_1$ and $p_2$ by connecting portions of $\zeta_1$ and $\zeta_2$ between them.  Replacing segments of $H_\sigma$ with crossings, this gives a simple closed curve in the diagram of the link meeting the link transversely in exactly two points.  Since the diagram is prime, as in Definition~\ref{def:prime}, the curve bounds no crossings on one side.  It follows that $\zeta_1$ and $\zeta_2$ must run adjacent to $C$ on this side.
\end{proof}

We now return to proving Proposition~\ref{prop:EPDTypes}.  When we push an EPD into a normal square as in Lemma~\ref{lemma:EPDtoSquare}, the boundary of the square has two white sides, $\beta_V$ and $\beta_W$, and two shaded sides.  By Lemma~\ref{lemma:SameRegion}, the proposition will be true if $\beta_V$ and $\beta_W$ lie in the same polyhedral region.  We need to show the proposition is true if the white sides are in different polyhedral regions.  In that case, the following lemma, the analogue of \cite[Proposition~6.10]{fkp}, shows that $\beta_V$ and $\beta_W$ have a particular form.

\begin{lemma}[Hanging and standing regions]\label{lemma:lollipops}
With the hypotheses of Proposition~\ref{prop:EPDTypes}, either
\begin{enumerate}
\item the EPD runs adjacent to segments of a 2--edge loop of one of the desired types, or
\item the polyhedral regions containing $\beta_V$ and $\beta_W$ are of one of four forms, shown in Figure~\ref{fig:lollipops}.  The two forms on the left of that figure are refered to as \emph{hanging regions}, the two on the right as \emph{standing regions}, and we use $A$ or $B$ to denote whether the region is an $A$ or $B$ region.
\end{enumerate}
\end{lemma}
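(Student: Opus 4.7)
My plan is as follows. By Lemma~\ref{lemma:SameRegion}, we may assume $V$ and $W$ lie in distinct polyhedral regions $R_V$ and $R_W$. The two shaded sides of the normal square $E'$ are simple arcs in their respective shaded faces (one orange, one green), each beginning on a white edge at $V$ and terminating on a white edge at $W$. Since $R_V \neq R_W$, and a shaded face crosses between polyhedral regions only through a non-prime switch, each of these two shaded arcs must cross at least one non-prime arc between $V$ and $W$. This is the essential new feature compared to the same-region case.

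I then focus on the white face $V$; the argument at $W$ is symmetric. The local picture near $V$ is controlled by whether $R_V$ is an $A$--region or a $B$--region, which by Remark~\ref{rmk:OrientedSquare} determines which of orange and green sits at the tail of a tentacle at $V$ and which sits at the head. Starting from $\beta_V$, each shaded arc exits $V$ along a tentacle, and I trace it through its shaded face until it either (i) first enters a non-prime switch, or (ii) first encounters a segment that, together with a segment at $\partial V$, constitutes a candidate 2--edge loop. The Utility Lemma (Lemma~\ref{lemma:utility}) limits re-crossings of any state circle along the trace, the Shortcut Lemma (Lemma~\ref{lemma:shortcut}) governs entries into non-prime half-disks, and the Downstream Lemma (Lemma~\ref{lemma:downstream}) identifies the staircase patterns that appear along downstream runs.

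If both shaded arcs can be traced all the way to a 2--edge loop before leaving $R_V$, then combining Lemmas~\ref{lemma:AdjacentLoop} and~\ref{lemma:AdjacentPoints} with the staircase structure supplied by Lemma~\ref{lemma:downstream} matches the resulting configuration to one of Figures~\ref{fig:EPDformA}--\ref{fig:EPDformG}, placing us in conclusion (1) of the lemma. If not, then at least one of the shaded arcs must leave $R_V$ across a non-prime arc $\alpha$ essentially immediately after leaving $\beta_V$. A short case check, split according to whether $R_V$ is an $A$-- or $B$--region and according to whether $\alpha$ lies on the state circle carrying the ideal vertex that $\beta_V$ cuts off or on the opposite state circle bounding $V$, eliminates all but the four configurations of Figure~\ref{fig:lollipops}: the hanging forms (non-prime arc above $V$ on the state circle opposite the vertex) and the standing forms (non-prime arc below $V$ on the state circle carrying the vertex), in either an $A$-- or $B$--region.

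The main obstacle, relative to the $A$--adequate proof of \cite[Proposition~6.10]{fkp}, is the doubled case count. Tentacles in $A$-- and $B$--regions propagate in mirrored directions (right--down versus left--down), so every branch in the analysis doubles, and when a shaded arc transits from an $A$--region to a $B$--region across a non-prime switch, the staircase produced by the Downstream Lemma on one side must be matched across the switch by its mirror on the other side without producing a loop forbidden by Lemma~\ref{lemma:escher} (Escher Stairs). Careful bookkeeping of tentacle orientations at each non-prime switch, together with repeated appeals to the Utility Lemma to rule out stray re-crossings, is what reduces the many \emph{a priori} possibilities to the seven loop types and four hanging/standing forms.
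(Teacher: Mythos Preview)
Your argument rests on a false premise: you assert that ``a shaded face crosses between polyhedral regions only through a non-prime switch,'' and hence that each shaded arc of $\partial E'$ must cross a non-prime arc between $V$ and $W$. This is not true. Polyhedral regions are the complementary regions of the state circles \emph{together with} the non-prime arcs (Definition~\ref{def:PolyhedralRegion}), and a tentacle, while itself lying in a single polyhedral region, attaches through the gap at its head to shaded face on the \emph{opposite} side of a state circle (see Definition~\ref{def:UpperPolyhedron}, step (4)). Thus a simple arc in a shaded face routinely passes from one polyhedral region to an adjacent one by crossing a state circle through a tentacle head, with no non-prime switch involved. In particular, when $\beta_V$ and $\beta_W$ lie in distinct polyhedral regions, the shaded arcs $\zeta_1,\zeta_2$ may get from one to the other by crossing only state circles. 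Your subsequent characterization of the hanging and standing forms as being determined by where a non-prime arc sits relative to $V$ is therefore incorrect: those forms describe how $\zeta_1$ and $\zeta_2$ meet $\beta_W$ relative to the state circle $C_W$, and need not involve any non-prime arc in the region of $\beta_W$ at all.

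The paper's proof proceeds quite differently. The key structural input is \cite[Lemma~4.16]{fkp}, which, once $\beta_V$ and $\beta_W$ are in distinct polyhedral regions, says that (directing toward $\beta_W$) one arc $\zeta_1$ crosses a state circle $C_W$ downstream and reaches $\beta_W$ without further crossings, while $\zeta_2$ either reaches $\beta_W$ directly or first runs upstream across some $C_2$ and back. This yields a clean four-case split (which tentacle head meets which tentacle tail, crossed with whether $\zeta_2$ detours upstream); two of the cases immediately produce 2--edge loops of types $\mathcal{A}$ or $\mathcal{B}$, and the other two, after invoking Lemmas~\ref{lemma:AdjacentLoop} and~\ref{lemma:AdjacentPoints}, either yield a loop of type $\mathcal{A}$, $\mathcal{B}$, or $\mathcal{D}$ or else force the hanging/standing pictures. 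The entire analysis takes place inside the single polyhedral region containing $\beta_W$, so homogeneity reduces everything to the all--$A$ argument of \cite[Proposition~6.10]{fkp} or its reflection. You should replace the non-prime-arc premise with this lemma and redo the case analysis accordingly.
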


\begin{figure}
  \begin{tabular}{cccc}
    \includegraphics{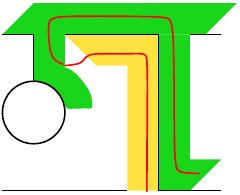} &
    \includegraphics{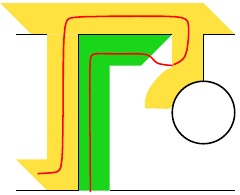} &
    \includegraphics{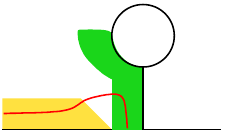} &
    \includegraphics{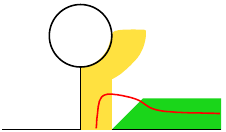} \\
    Hanging--$A$ & Hanging--$B$ & Standing--$A$ & Standing--$B$
  \end{tabular}
\caption{Possibilities for Lemma~\ref{lemma:lollipops}.  Two are in $A$--regions, two in $B$--regions.  We refer to the forms as \emph{hanging regions} and \emph{standing regions}.}
\label{fig:lollipops}
\end{figure}

The proof of Lemma~\ref{lemma:lollipops} follows that of \cite[Proposition~6.10]{fkp}, which is easily generalized to the homogeneously adequate case.  That proof is divided into four cases, but in all cases, everything of interest happens within the same polyhedral region.  Because our diagram is homogeneous, all resolutions in the region must be either all--$A$ or all--$B$.  If all--$A$, the proof of \cite[Proposition~6.10]{fkp} goes through without change.  If all--$B$, the result holds by reflection.  We walk through the argument briefly in order to determine the forms of the squares that arise.

\begin{proof}
First, by Lemma~\ref{lemma:SameRegion}, we may assume white sides of $\partial E'$, $\beta_V$ and $\beta_W$, are in distinct polyhedral regions.  Then apply \cite[Lemma~4.16]{fkp}.  This lemma is stated in the all--$A$ case in \cite{fkp}, but it concerns one fixed polyhedral region.  Thus it holds without change if our region is an $A$--region, and holds by reflection if our region is a $B$--region (this is noted in \cite[Section~4.5]{fkp}).  The lemma states that when we direct arcs in the shaded faces toward $\beta_W$, one of them, call it $\zeta_1$, runs downstream across a state circle $C_W$, and then connects to $\beta_W$ without crossing any additional state circles or non-prime arcs.  The other, call it $\zeta_2$, may either connect directly to $\beta_W$ when it enters the region of $\beta_W$, or it may run upstream first, across a different state circle $C_2$, then back downstream later across $C_2$ to connect to $\beta_W$.

Two of the cases now concern $\zeta_2$, whether it connects directly to $\beta_W$ or runs upstream first.  In each case, $\beta_W$ meets the tentacles containing $\zeta_1$ and $\zeta_2$.  We need to consider whether the head of the tentacle of $\zeta_1$ meets the tail of that of $\zeta_2$, or whether the tail of the tentacle of $\zeta_1$ meets the head of that of $\zeta_2$.  Thus there are four cases total.  We step through the cases briefly.  Refer to \cite{fkp} for complete details. 

\underline{Case 1a}: The arc $\zeta_2$ runs directly to $\beta_W$ and the head of the tentacle of $\zeta_2$ meets the tail of the tentacle containing $\zeta_1$.  Then $\zeta_1$ and $\zeta_2$ both run adjacent to segments connecting some $C'$ to $C_W$, and these segments must be distinct by the fact that the diagram is prime.  We pick up a 2--edge loop in this case that has the form of type $\mathcal{A}$, elementary, Figure~\ref{fig:EPDformA}.

\underline{Case1b}: The arc $\zeta_2$ runs directly to $\beta_W$ and the tail of the tentacle of $\zeta_2$ meets the head of the tentacle of $\zeta_1$.  This leads to one of the two standing pictures, depending on whether the region is an $A$--region or a $B$--region.

\underline{Case2a}:  The arc $\zeta_2$ runs upstream before running to $\beta_W$, and the head of the tentacle of $\zeta_1$ meets the tail of the tentacle of $\zeta_2$.  In this case, $\zeta_2$ runs upstream adjacent to a segment joining $C_W$ and some state circle $C_2$.  When $\zeta_2$ runs downstream again across $C_2$, it must run adjacent to another segment joining $C_2$ and $C_W$.  These two segments form the desired 2--edge loop.  It has the form of type $\mathcal{B}$, elementary, in Figure~\ref{fig:EPDformB}.

\underline{Case2b}:  The arc $\zeta_2$ runs upstream before running to $\beta_W$, and the tail of the tentacle of $\zeta_1$ meets the head of the tentacle of $\zeta_2$.  Again when $\zeta_2$ runs upstream, it must do so adjacent to a segment $s_2$ connecting $C_W$ and some $C_2$.  After $\zeta_2$ runs downstream across $C_2$, it connects immediately to $\beta_W$, implying that the tail of the tentacle of $\zeta_1$ lies on $C_2$.  Then $\zeta_1$ must run downstream adjacent to a segment $s_1$ connecting $C_W$ and $C_2$.  If $s_1$ and $s_2$ are distinct, we have a 2--edge loop of the form of type~$\mathcal{A}$.

If $s_1$ and $s_2$ agree, then notice that $\zeta_1$ and $\zeta_2$ both run adjacent to a point $p$ on the state circle $C_2$ where $s_1=s_2$ meets that state circle.  They also both meet at $\beta_W$, hence by shrinking $\beta_W$, we may assume they run adjacent to the point $p'$ on $C_2$ that is cut off by $\beta_W$.  Apply Lemmas~\ref{lemma:AdjacentLoop} (Adjacent Loop) and \ref{lemma:AdjacentPoints} (Adjacent points).  If $\zeta_2$ runs upstream from $C_W$, then Lemma~\ref{lemma:AdjacentLoop} implies there is a 2--edge loop.  Note it runs over tentacles of the same color.  If $\zeta_2$ runs from $\beta_W$ in an $A$--region across $C_2$ to a $B$--region, or from $\beta_W$ in a $B$--region across $C_2$ to an $A$--region, then the 2--edge loop will be of type~$\mathcal{B}$, elementary, as on the 2nd row of Figure~\ref{fig:EPDformB}.  If it runs from $\beta_W$ in an $A$--region across $C_2$ to an $A$--region, or $B$--region to a $B$--region, it will be type~$\mathcal{D}$, elementary, as in Figure~\ref{fig:EPDformD}, left.  

If $\zeta_2$ does not run upstream from $C_W$,
Lemma~\ref{lemma:AdjacentPoints} implies that the diagram has the form of one of the two hanging regions on the left of Figure \ref{fig:lollipops}.
\end{proof}

The previous results imply that if there is not a 2--edge loop, $\beta_V$ and $\beta_W$ lie in different polyhedral regions and the regions in which they lie have one of the forms of Figure~\ref{fig:lollipops}.  We set up some notation.  For each of the four forms of Figure~\ref{fig:lollipops}, there is a horizontal state circle at the bottom of the diagram.  In the case we are looking at the region of $\beta_W$, call that state circle $C_W$.  If we are looking at $\beta_V$, call that state circle $C_V$.  Note that $C_W$ might equal $C_V$, but we can assume that the regions of $\beta_W$ and $\beta_V$ are distinct by Lemma~\ref{lemma:SameRegion}.

In the following lemmas, we use the forms of Figure~\ref{fig:lollipops} to try to determine the form of adjacent regions, and eventually show there must be a 2--edge loop.  The next two lemmas are the generalization of \cite[Lemmas~6.11, 6.12]{fkp} to the homogeneously adequate case.

\begin{lemma}\label{lemma:StaircaseBetween}
Suppose $C_W \neq C_V$ and that $\zeta_2$, directed away from $\beta_W$, crosses $C_W$ running downstream.  Then for each $\zeta_i$, $i=1, 2$, there exists a staircase whose top is on $C_W$, with at least one segment, and with the following additional properties:
\begin{enumerate}
\item The arc $\zeta_i$ runs adjacent to each segment of the staircase it defines.
\item Both staircases run between $C_W$ and a state circle $C$, where either $C$ bounds a non-prime half--disk $D$ if $D$ is the first non-prime half--disk that $\zeta_2$ runs into without exiting, or $C=C_V$ if $\zeta_2$ exits every non-prime half--disk it enters.
\item Although the staircases of $\zeta_1$ and $\zeta_2$ may have distinct segments, the segments connect the same pairs of state circles each step of the way.
\end{enumerate}
\end{lemma}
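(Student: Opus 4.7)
The plan is to first produce the staircase for $\zeta_2$ from the Downstream Lemma, and then force $\zeta_1$ to run in parallel, step by step, using the fact that $\zeta_1$ and $\zeta_2$ are opposite shaded edges of the normal square $E'$. The argument will be inductive on the number of stairs in $\zeta_2$'s staircase, with Lemma~\ref{lemma:AdjacentPoints} providing the bridge between the two arcs at each step.

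First I would apply Lemma~\ref{lemma:downstream} to $\zeta_2$. By hypothesis, $\zeta_2$ begins by crossing $C_W$ downstream, so we may follow it as long as every non-prime arc it meets is crossed by exiting the corresponding half-disk. Exactly one of two things happens: either $\zeta_2$ runs into a non-prime half-disk $D$ that it fails to exit (truncate there and let $C$ be the state circle bounding $D$), or $\zeta_2$ exits every non-prime half-disk it enters and eventually reaches $\beta_V$ (the form of the hanging/standing region in Figure~\ref{fig:lollipops} shows that $\beta_V$ sits on a tentacle terminating on $C_V$, so set $C=C_V$). Lemma~\ref{lemma:downstream} then gives a staircase whose top is on $C_W$, whose bottom is on $C$, and along whose segments $\zeta_2$ runs downstream. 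Since $\zeta_2$ crossed $C_W$ downstream at its first step, this staircase has at least one segment, establishing (1) and (2) for $\zeta_2$.

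Next I would establish the parallel staircase for $\zeta_1$ by induction on the number of stairs of $\zeta_2$'s staircase. For the base case, the hanging/standing form of the region containing $\beta_W$, combined with the fact that $\beta_W$ cuts off a single ideal vertex of $C_W$ (a remnant of $H_\sigma$), forces $\zeta_1$ to emerge on the opposite side of that vertex from $\zeta_2$ and to cross $C_W$ downstream along a (possibly different) segment between $C_W$ and the same state circle $C_1$ that $\zeta_2$ reaches. For the inductive step, suppose $\zeta_1$ and $\zeta_2$ have already been shown to run downstream in parallel through $C_W = C_0, C_1, \dots, C_k$ along corresponding (but possibly distinct) segments, and let $s_2$ be $\zeta_2$'s next segment from $C_k$ to $C_{k+1}$. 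Then $\zeta_1$ and $\zeta_2$ both run adjacent to the point of $C_k$ where they just descended and to a second point of $C_k$ near $s_2$. Apply Lemma~\ref{lemma:AdjacentPoints}: option (1) triggers Lemma~\ref{lemma:AdjacentLoop} and exhibits a 2--edge loop of one of the types tabulated in Proposition~\ref{prop:EPDTypes}, against our standing assumption; option (2) forces $\zeta_1$ to stay adjacent to the same portion of $C_k$ and then to cross $C_k$ downstream adjacent to some segment $s_1$ between $C_k$ and $C_{k+1}$. Whenever $\zeta_2$'s next step crosses a non-prime arc and exits the corresponding half-disk, the Shortcut Lemma (Lemma~\ref{lemma:shortcut}) applied to $\zeta_1$ forces $\zeta_1$ to do the same, keeping the two arcs in the same polyhedral sub-region. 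Iterating to the bottom of $\zeta_2$'s staircase yields a staircase for $\zeta_1$ with matching endpoints, giving (1), (2), and (3).

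The main obstacle will be the bookkeeping around non-prime arcs and around the ``turn'' at each state circle. Concretely, I need to verify that at each inductive step the two points on $C_k$ that $\zeta_1$ and $\zeta_2$ are simultaneously adjacent to are genuinely separated only by segments incident to $C_k$ (so Lemma~\ref{lemma:AdjacentPoints} is applicable), and that every non-prime half-disk is crossed by both arcs in the same way. This is exactly the place where the homogeneity hypothesis matters: the region swept out by a single step may switch from an $A$--region to a $B$--region only across a state circle or non-prime arc, and the Downstream/Shortcut lemmas work uniformly in both cases, so no new behavior appears beyond the $A$--adequate analogue in \cite{fkp}. Once this is checked, conclusion (3) is immediate, since each step of the two parallel staircases connects the same pair $\{C_k, C_{k+1}\}$.
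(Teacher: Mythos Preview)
Your proposal has a genuine gap: you invoke a ``standing assumption'' that no 2--edge loop exists, but Lemma~\ref{lemma:StaircaseBetween} carries no such hypothesis. The staircases are asserted to exist \emph{unconditionally}, regardless of whether $\partial E'$ happens to run over a 2--edge loop. Your inductive step breaks down precisely at option~(1) of Lemma~\ref{lemma:AdjacentPoints}: when it fires, you get a 2--edge loop, but that does not advance the construction of $\zeta_1$'s staircase, and you have no right to discard this case. You are in effect proving a dichotomy (``either a 2--edge loop, or parallel staircases''), which is the content of the \emph{next} lemma, Lemma~\ref{lemma:FullStaircase}, not this one.

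There is a second problem with your base case. When directed away from $\beta_W$, the arc $\zeta_1$ does \emph{not} cross $C_W$ running downstream; by \cite[Lemma~4.16]{fkp} (as recalled in the proof of Lemma~\ref{lemma:lollipops}), it is $\zeta_1$ that runs downstream \emph{toward} $\beta_W$ across $C_W$, hence upstream when directed away. So your claim that $\zeta_1$ ``crosses $C_W$ downstream along a (possibly different) segment between $C_W$ and $C_1$'' is false, and the induction never gets started.

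The paper's argument avoids both difficulties. It first shows, by a separation argument and the Utility Lemma, that $\zeta_1$ must cross the \emph{bottom} state circle $C$ (either $C_V$ or the boundary of the first unexited non-prime half--disk). It then \emph{reverses} the direction on $\zeta_1$ and applies Lemma~\ref{lemma:downstream} to $\zeta_1$ running from $C$ back to $C_W$; this gives $\zeta_1$'s staircase globally, with no step-by-step comparison to $\zeta_2$. Claim~(3) is then proved by a closed-curve argument: the two staircases together with arcs along $C_W$ and $C$ bound a region of $H_\sigma$, the intermediate state circles must pair up inside that region, and $\sigma$--adequacy forbids any state circle from connecting to its staircase-neighbor, forcing the left-to-right matching. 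This argument is insensitive to whether 2--edge loops are present.
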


\begin{proof}
To obtain the staircase of $\zeta_2$, apply Lemma~\ref{lemma:downstream} (Downstream) to $\zeta_2$ directed away from $\beta_W$.  Since $\zeta_2$ runs downstream across $C_W$, there is at least one segment of the resulting staircase, $\zeta_2$ runs adjacent to each segment, and the staircase either reaches $C_V$, or ends on the first state circle $C$ for which $\zeta_2$ crosses into a non-prime half--disk adjacent to $C$ and does not exit.

Now consider the arc $\zeta_1$.  If $\zeta_2$ crosses into a non-prime half--disk without exiting, then the state circle $C$ bounding that half--disk does not separate $\beta_V$ and $\beta_W$.  Hence note in this case that $\zeta_1$ must also cross into the non-prime half--disk, and it must do so by crossing $C$.  Since $C$ is not separating, $\zeta_1$ must actually cross it twice, once upstream and once downstream, by Lemma~\ref{lemma:utility} (Utility).  If, however, the staircase of $\zeta_2$ runs to $\beta_V$ without running into a non-prime half--disk separating $\beta_V$ and $\beta_W$, then $\zeta_2$ crosses $C_V$ running downstream.  It follows from \cite[Lemma~4.16]{fkp} that $\zeta_1$ must cross $C_V$ running upstream.  In all cases, $\zeta_1$, directed from $\beta_W$ to $\beta_V$, crosses the last state circle in the staircase of $\zeta_2$ (hereafter referred to as $C$) in the upstream direction.

To obtain the staircase of $\zeta_1$, change the direction on $\zeta_1$ to run downstream from $C$ to $C_W$ and apply Lemma~\ref{lemma:downstream} (Downstream).  This finishes the first two claims of the lemma.

To prove the final claim, the staircases of $\zeta_1$, $\zeta_2$, and arcs between their tops on $C_W$ and on $C$ define a closed curve $\gamma$ in the graph $H_\sigma$.  This curve crosses a state circle between every pair of segments, hence $\gamma$ must cross each such state circle twice, and so the state circles connect in pairs within the region bounded by $\gamma$.  By $\sigma$--adequacy of the diagram, no state circle can connect to the next adjacent state circle in the staircase.  This forces the two staircases to have the same number of stairs, and the state circles to connect directly left to right, giving the third claim of the lemma.
\end{proof}

\begin{lemma}[Full staircase]\label{lemma:FullStaircase}
Suppose $C_W \neq C_V$, and that $\zeta_2$ crosses $C_W$ running downstream when directed from $\beta_W$ to $\beta_V$. Then either:
\begin{enumerate}
\item the EPD runs over a 2--edge loop of one of the forms of Proposition~\ref{prop:EPDTypes}; or
\item $\beta_W$ is one of four forms:
\begin{enumerate}
\item\label{item:A-rainbow} as on the far left of Figure~\ref{fig:lollipops} (hanging--$A$), with a nonempty sequence of segments forming a right--down staircase below $C_W$ (i.e.\ all segments in $A$--regions), and with $\zeta_1$ and $\zeta_2$ adjacent to either side of each segment and each state circle of that staircase,
\item\label{item:B-rainbow} as on second to left of Figure~\ref{fig:lollipops} (hanging--$B$) with a nonempty sequence of segments forming a left--down staircase below $C_W$ (i.e.\ all segments in $B$--regions), and with $\zeta_1$ and $\zeta_2$ adjacent to either side of each segment and each state circle of the staircase,
\item\label{item:A-lollipop} as on the second to right of Figure~\ref{fig:lollipops} (standing--$A$), with a nonempty left--down staircase ($B$--staircase), and with $\zeta_1$ and $\zeta_2$ adjacent to either side,
\item\label{item:B-lollipop} as on the far right in Figure~\ref{fig:lollipops} (standing--$B$) with a nonempty right--down staircase ($A$--staircase) with $\zeta_1$ and $\zeta_2$ adjacent on either side.
\end{enumerate}
\end{enumerate}
In the second case, the staircase is maximal in the sense that it either runs to $C_V$, or runs to the first state circle $C$ for which there is a non-prime half--disk on $C$ separating $\beta_W$ and $\beta_V$.  
\end{lemma}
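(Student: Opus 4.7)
My plan is to apply Lemma~\ref{lemma:StaircaseBetween} to produce the staircases of $\zeta_1$ and $\zeta_2$ below $C_W$, and then to show by induction on the stairs that either the staircase has the uniform direction claimed, or a 2-edge loop of one of Types $\mathcal{A}$-$\mathcal{G}$ appears. First, since $\zeta_2$ crosses $C_W$ downstream, Lemma~\ref{lemma:StaircaseBetween} gives two parallel staircases, each with at least one segment, running from $C_W$ to a common terminal state circle $C$ (either $C_V$ or a state circle bounding the first non-prime half-disk that separates $\beta_V$ and $\beta_W$). Moreover, the two staircases connect the same pairs of state circles step by step, so it suffices to analyze a single staircase and then confirm that $\zeta_1$ and $\zeta_2$ sit on opposite sides of each segment.

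The base case is determined by inspecting the four configurations of Figure~\ref{fig:lollipops} classified by Lemma~\ref{lemma:lollipops}. In the hanging-$A$ case, $\zeta_2$ leaves $\beta_W$ by running directly into the $A$-region just below $C_W$, so the first segment of its staircase sits in an $A$-region, and the first step is right-down; the hanging-$B$ case is the mirror. In the standing-$A$ configuration, the arcs must wind around the ``lollipop'' so that the first segment traversed below $C_W$ lies in a $B$-region, giving a left-down initial step, and standing-$B$ is similarly right-down. This establishes the expected initial direction in each case.

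For the inductive step, assume the first $k$ steps all lie in regions of the expected type. Consider the $(k+1)$-st step. By $\sigma$-homogeneity, within a single polyhedral region every segment is of the same type; so a change of direction requires the staircase to cross a non-prime arc into a new polyhedral region of the opposite type. If no such transition occurs, the direction persists and the inductive conclusion holds. If such a transition does occur, then the two parallel arcs $\zeta_1$ and $\zeta_2$ run adjacent to the transition from opposite sides through the non-prime switch. Applying Lemma~\ref{lemma:AdjacentLoop} and Lemma~\ref{lemma:AdjacentPoints} to $\zeta_1, \zeta_2$ near the transition produces two distinct segments joining a common pair of state circles at the transition, which is a 2-edge loop. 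One then reads off its Type by checking (i) whether the orange and green tentacles passing by it have the same or opposite color, (ii) whether the white vertex cut off by $\beta_V$ or $\beta_W$ is separated from the 2-edge loop by a non-prime arc, and (iii) which type of polyhedral region lies on each side of the non-prime arc. These three binary choices match the distinctions that enumerate Types $\mathcal{A}$-$\mathcal{G}$, so one of those cases always applies. The maximality statement is automatic, as the Downstream Lemma, Lemma~\ref{lemma:downstream}, invoked inside Lemma~\ref{lemma:StaircaseBetween} already extended the staircase as far as possible.

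The step I expect to be the main obstacle is the transition case: verifying that for every possible combination of $A$/$B$ region types above and below the transition, the configuration of tentacles through the non-prime switch carries the arcs $\zeta_1$ and $\zeta_2$ into precisely the tentacle pattern of one of the seven types, rather than into a more exotic configuration that escapes the list. In particular, the mixed $A$-to-$B$ and $B$-to-$A$ transitions (which do not appear in the $A$-adequate analysis of \cite{fkp}) must be matched carefully against Types $\mathcal{C}$, $\mathcal{E}$, $\mathcal{F}$, $\mathcal{G}$, paying attention to the orientation of the cut-off triangle required by Lemma~\ref{lemma:EPDtoSquare}(3).
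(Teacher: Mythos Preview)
Your inductive set-up is reasonable, but the heart of the inductive step rests on a false premise. You write that ``a change of direction requires the staircase to cross a non-prime arc into a new polyhedral region of the opposite type.'' This is not how the $A$/$B$ type can change along a staircase. The type of a region is fixed on each component of the complement of the state circles $s_\sigma$; non-prime arcs only subdivide such a component into polyhedral regions, all of which inherit the same type. A staircase changes region precisely by crossing a state circle $C_k$, and the region on the far side of $C_k$ may be of either type, with no non-prime arc involved. So your ``transition'' analysis through a non-prime switch is attacking a situation that does not arise, while the actual obstruction --- the possibility that the $(k+1)$-st step lies in a region of the wrong type just across $C_k$ --- is never addressed.

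There is also a second gap: you never treat the case where the $(k+1)$-st segments $s_1$ and $s_2$ of the two staircases differ. Lemma~\ref{lemma:StaircaseBetween} guarantees that the two staircases hit the same sequence of state circles, but not that they use the same segments; when $s_1\neq s_2$ you immediately get a 2--edge loop (of type~$\mathcal{A}$ or~$\mathcal{C}$, depending on the region types on either side of $C_k$), and this case must be disposed of before you can assume the two arcs run adjacent to a common segment.

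The paper's argument closes both gaps as follows. After handling $s_1\neq s_2$, one assumes $s_1=s_2$ and looks at the two points $p,p'$ on $C_k$ where the $k$-th and $(k+1)$-st segments attach. Lemmas~\ref{lemma:AdjacentLoop} and~\ref{lemma:AdjacentPoints} then force either a 2--edge loop (types~$\mathcal{B}$, $\mathcal{F}$, or~$\mathcal{G}$, depending on which arc runs upstream and on which side) or that $\zeta_1$ and $\zeta_2$ run parallel along $C_k$ between $p$ and $p'$. The decisive observation is then a direction check: in, say, the hanging--$A$ case, $\zeta_2$ runs to the right along the top of $C_k$, so if the $(k+1)$-st segment were in a $B$--region, $\zeta_1$ would have to run to the left along the bottom of $C_k$, contradicting parallelism. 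This is what forces the staircase to stay in regions of the same type, and it has nothing to do with non-prime arcs.
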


The options are shown in Figure~\ref{fig:staircases}.

\begin{figure}
  \begin{tabular}{cccc}
    \includegraphics{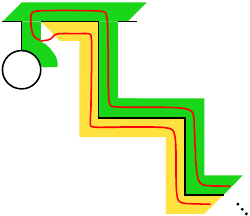} &
    \includegraphics{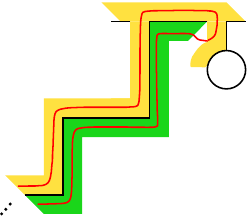} &
    \includegraphics{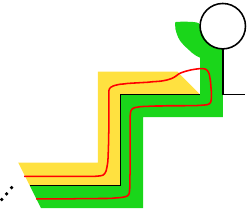} &
    \includegraphics{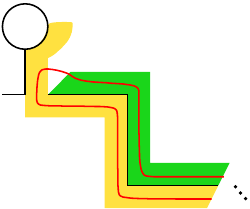} \\
    \eqref{item:A-rainbow} & \eqref{item:B-rainbow} & \eqref{item:A-lollipop} & \eqref{item:B-lollipop}
  \end{tabular}
  \caption{Forms of staircases from Lemma~\ref{lemma:FullStaircase}.  We say the right--down staircases on either end are $A$--staircases, and the left--down staircases in the center are $B$--staircases.}
  \label{fig:staircases}
\end{figure}

\begin{proof}
By Lemma~\ref{lemma:StaircaseBetween}, there exists a staircase from $C_W$ either to $C$ or $C_V$.  Let $k$ be the number of stairs traversed from $C_W$ in the staircase.  We will show by induction on $k$ that either there is a 2--edge loop of one of the proper forms, or the $(k+1)$-st stair has the form of case~(2) of the statement of the lemma. 

For the base case of the induction, note that if we have traversed zero stairs of the staircase, that $\beta_W$ must have one of the four forms of Figure~\ref{fig:lollipops} by Lemma~\ref{lemma:lollipops}.  We may think of these as having one of the forms in Figure~\ref{fig:staircases} when there are zero steps.  (Note Lemma~\ref{lemma:StaircaseBetween} will imply $k>0$.)

So suppose after traversing $k\geq 0$ stairs there is no 2--edge loop, and $\beta_W$ has one of the four forms in the statement of the lemma, possibly with an empty staircase if $k=0$.  Let $s_1$ denote the $(k+1)$-st segment of the staircase of $\zeta_1$, and let $s_2$ denote the $(k+1)$-st segment of the staircase of $\zeta_2$.  If $s_1 \neq s_2$, then there is a 2--edge loop at this step of the staircases.  Because the boundary of the EPD runs through tentacles of different color adjacent to the segments of the 2--edge loop, the 2--edge loop is of type~$\mathcal{A}$ or $\mathcal{C}$.  If we have an $A$--staircase running to an $A$--region, or a $B$--staircase running to a $B$--region, it will be type~$\mathcal{A}$, general.  Otherwise, it will be type~$\mathcal{C}$, general.

So suppose $s_1 = s_2$.  Now, the bottom of the $k$-th segment meets $C_k$ at a point $p$ and $\zeta_1$ and $\zeta_2$ both run adjacent to $p$ on $C_k$ (or if $k=0$ and $\beta_W$ lies in a standing region, then $p$ is the point at the end of the segment on $C_W$, and $\zeta_1$ and $\zeta_2$ both run adjacent to $p$).  The top of the $(k+1)$-st segment meets $C_k$ at a point $p'$, and $\zeta_1$ and $\zeta_2$ both run adjacent to $p'$.  If one of $\zeta_1$, $\zeta_2$ runs upstream, then Lemma~\ref{lemma:AdjacentLoop} (Adjacent Loop) implies that this arc runs through a 2--edge loop.  If $\zeta_2$ runs through the 2--edge loop, then the loop is on the same side of $C_k$ as the staircase from $\beta_W$, and it will be of type~$\mathcal{F}$ if $k>0$ or $\beta_W$ lies in a hanging region, and of type~$\mathcal{B}$--elementary if $k=0$ and $\beta_W$ lies in a standing region.  If $\zeta_1$ runs through the 2--edge loop, then the 2--edge loop lies on the opposite side of $C_k$ as the staircase from $\beta_W$, and it will be of type~$\mathcal{G}$.

Suppose then that there is no 2--edge loop.  Lemma~\ref{lemma:AdjacentPoints} (Adjacent Points) implies that $\zeta_1$ and $\zeta_2$ run parallel to $C_k$ between $p$ and $p'$. 

Now, if $\beta_W$ is in a hanging--$A$ region, by induction the $k$ steps between $C_W$ and $C_k$ form a right--down staircase through $A$--regions (case (2)\eqref{item:A-rainbow}).  Arcs $\zeta_1$ and $\zeta_2$ run parallel to $C_k$ from the end of the $k$-th segment to the beginning of the $(k+1)$-st.  The arc $\zeta_2$, in the green face, is running to the right across the top of $C_k$.  If the $(k+1)$-st segment of the staircase is in a $B$--region, then the arc $\zeta_1$, in the orange face, must run to the left across the bottom of $C_k$.  This contradicts the fact that the arcs run parallel.  So the $(k+1)$-st segment is in an $A$--region, and $\zeta_1$, $\zeta_2$ run parallel to each other from the base of the $k$-th step to the base of the $(k+1)$-st step.  Thus the staircase is still of the form in case (2)\eqref{item:A-rainbow}.  The same argument, up to reflection, applies if $\beta_W$ lies in a hanging--$B$ region. 

Now suppose that $\beta_W$ lies in a standing--$A$ region, and the staircase from $C_W$ to $C_k$ is of the form in case (2)\eqref{item:A-lollipop}.  That is, each segment runs through a $B$ region.  Again if there is no 2--edge loop, then $\zeta_1$ and $\zeta_2$ run parallel to $C_k$ from the end of the $k$-th segment to the beginning of the $(k+1)$-st segment.  The arc $\zeta_2$, in the orange face, runs from the bottom of the $k$-th segment to the left across the top of $C_k$.  Thus the arc $\zeta_1$, in the green face, must run to the left across the bottom of $C_k$.  This will happen only if the region containing the $(k+1)$-st segment is a $B$--region, giving the inductive step in this case.  The same argument, up to reflection, gives the inductive step in the case $\beta_W$ lies in a standing--$B$ region.  
\end{proof}

\begin{lemma}[Terminating staircase]\label{lemma:TermStair}
Suppose $\zeta_2$ runs across $C_W$ in the downstream direction when directed from $\beta_W$ to $\beta_V$, out of every non-prime half--disk that it enters, and terminates with $\zeta_2$ crossing $C_V$. Then the conclusion of Proposition~\ref{prop:EPDTypes} holds.
\end{lemma}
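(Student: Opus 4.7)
The plan is to apply Lemma~\ref{lemma:FullStaircase} (Full Staircase) on the $\beta_W$ side to obtain a staircase from $C_W$ all the way to $C_V$, then apply Lemma~\ref{lemma:lollipops} on the $\beta_V$ side to pin down the form of $\beta_V$'s region, and finally perform one more iteration of the inductive step from the proof of Lemma~\ref{lemma:FullStaircase} at the state circle $C_V$ where the two sides meet.

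First, I would invoke Lemma~\ref{lemma:FullStaircase}. Under the hypothesis that $\zeta_2$ crosses $C_W$ downstream, exits every non-prime half--disk it enters, and terminates crossing $C_V$, the lemma either immediately produces a 2--edge loop of one of the required types (and we are done) or it gives a nonempty staircase from $C_W$ to $C_V$ of one of the four forms of Figure~\ref{fig:staircases}, with $\zeta_1$ and $\zeta_2$ adjacent to either side of every segment and state circle of that staircase. The ``maximality'' clause of Lemma~\ref{lemma:FullStaircase} ensures that the staircase actually reaches $C_V$, since no intervening non-prime half--disk can absorb $\zeta_2$.

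Next I would apply Lemma~\ref{lemma:lollipops} to $\beta_V$, which lies in a polyhedral region distinct from that of $\beta_W$ by Lemma~\ref{lemma:SameRegion}. Either a 2--edge loop appears (done) or the region of $\beta_V$ is one of the four hanging/standing forms of Figure~\ref{fig:lollipops} with $C_V$ as its bottom state circle, and $\zeta_1, \zeta_2$ enter this region by crossing $C_V$. Now let $t_1, t_2$ denote the segments attached to $C_V$ inside $\beta_V$'s region that are adjacent to the tentacles containing $\zeta_1$ and $\zeta_2$ respectively. If $t_1 \neq t_2$, then $t_1$, $t_2$, and $C_V$ form a 2--edge loop, and since $\zeta_1, \zeta_2$ lie in distinct shaded faces they run through tentacles of opposite colors adjacent to this loop, yielding type $\mathcal{A}$ or $\mathcal{C}$ (general) depending on whether the last step of the $C_W$-staircase and the top of $\beta_V$'s region share the same $A$/$B$ resolution type. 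If $t_1 = t_2$, then $\zeta_1$ and $\zeta_2$ both run adjacent to a common point $p$ on $C_V$ at the foot of the last staircase segment, and a common point $p'$ on $C_V$ at the top of $t_1=t_2$. Applying Lemma~\ref{lemma:AdjacentLoop}, if either arc runs upstream across another state circle between $p$ and $p'$, we obtain a 2--edge loop of type $\mathcal{B}$, $\mathcal{F}$, or $\mathcal{G}$, the exact type being determined by which arc crosses and on which side of $C_V$ the loop sits relative to the staircase.

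The main obstacle is the remaining ``parallel'' subcase, where Lemma~\ref{lemma:AdjacentPoints} forces $\zeta_1, \zeta_2$ to travel parallel along the same portion of $C_V$ between $p$ and $p'$. Here one performs a finite case check across the four possible forms of the staircase at $\beta_W$ (Figure~\ref{fig:staircases}) and the four possible forms of $\beta_V$'s region (Figure~\ref{fig:lollipops}), tracking for each arc which side of $C_V$ it runs on and the color (orange or green) of the tentacle it occupies. Exactly as in the inductive step of Lemma~\ref{lemma:FullStaircase}, the homogeneity of $\sigma$, adequacy, and the requirement that the two sides of each shaded face connect consistently to $\beta_V$ eliminate most combinations; the surviving cases are precisely those that produce a 2--edge loop of type $\mathcal{D}$ or $\mathcal{E}$, completing the proof.
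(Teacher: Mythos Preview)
Your outline has the right architecture---reduce via Lemma~\ref{lemma:FullStaircase} and Lemma~\ref{lemma:lollipops}, then match the two ends at a common state circle---but the endgame is not correct as stated, and one case is missing.

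First, Lemma~\ref{lemma:FullStaircase} carries the hypothesis $C_W\neq C_V$, so invoking it does not produce a ``nonempty staircase from $C_W$ to $C_V$'' in general; the case $C_V=C_W$ is genuinely possible under the hypotheses of this lemma and must be treated on its own. The paper handles that case separately, matching the hanging/standing forms of $\beta_W$ and $\beta_V$ directly across the single circle $C=C_W=C_V$.

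Second, your ``one more inductive step'' does not transplant cleanly. In the Full Staircase induction the segments $s_1,s_2$ come from Lemma~\ref{lemma:StaircaseBetween}, which guarantees two parallel staircases with matching state circles; once you are inside $\beta_V$'s hanging or standing region there is a single distinguished segment in the picture, so your dichotomy $t_1\neq t_2$ versus $t_1=t_2$ is not really available. More seriously, the conclusion you assert for the parallel subcase is wrong: the surviving combinations do \emph{not} produce $2$--edge loops of type~$\mathcal{D}$ or~$\mathcal{E}$ (type~$\mathcal{E}$ requires a non-prime arc separating the white side from the loop, which cannot occur here since $\zeta_2$ exits every non-prime half--disk). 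What actually happens is a contradiction. In the paper, for $C_V\neq C_W$ one applies Full Staircase from \emph{both} ends; the staircase type forced by $\beta_W$ then pins down which hanging/standing types $\beta_V$ can have, and in every compatible pairing the same shaded color ends up adjacent to both sides of $C_V$, giving a closed curve in the plane meeting the state circle $C_V$ exactly once---impossible. For $C_V=C_W$ the parallel subcase again yields a contradiction: tracking directions along $C$, one colored arc must run right while the other runs left, so they cannot be parallel. The $2$--edge loops that do arise in this lemma are of types~$\mathcal{B}$ and~$\mathcal{F}$ (via Lemma~\ref{lemma:AdjacentLoop}), together with whatever Full Staircase already produced; types~$\mathcal{D}$ and~$\mathcal{E}$ do not enter.
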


\begin{proof}
By Lemma~\ref{lemma:FullStaircase}, we may assume that either the result holds, or $C_V=C_W$, or $\beta_W$ has one of the four forms of Lemma~\ref{lemma:FullStaircase} (Staircase).  Swapping the roles of $V$ and $W$, in the latter case the lemma implies that $\beta_V$ also has one of those four forms.

Suppose first that $C_V \neq C_W$ and that $\beta_W$ lies in a hanging--$A$ region.  Then there is a nonempty right--down staircase from $\beta_W$ to $\beta_V$ with $\zeta_1$ and $\zeta_2$ adjacent to each segment and each state circle; segments run only through $A$--regions, as in \eqref{item:A-rainbow} of Figure~\ref{fig:staircases}.  Similarly, Lemma~\ref{lemma:FullStaircase} implies there is such a staircase from $\beta_V$ to $\beta_W$.  This must agree with the former staircase, so each segment in it must run only through $A$--regions.  So $\beta_V$ is either in a hanging--$A$ or standing--$B$ region, and $\beta_V$ is attached to a right--down staircase.  However, note that in these two cases, the green (darker shaded) face will be adjacent to the side of $C_V$ that contains the staircase from $\beta_W$, since green runs along ``tops'' of state circles, i.e.\ the side containing $\beta_W$, on the right--down staircase from $\beta_W$ in a hanging--$A$ region, item \eqref{item:A-rainbow} of Figure~\ref{fig:staircases}.  The green face will also be adjacent to the opposite side of $C_V$, since if $\beta_V$ lies in a hanging--$A$ or standing--$B$ region, the green face is adjacent to the side of the state circle $C_V$ containing $\beta_V$.  But then we may draw an arc in the green face with one endpoint on one side of $C_V$, and one endpoint on the other, which is disjoint from $C_V$ in the interior.  This arc can be connected in a neighborhood of $C_V$ to a closed curve in $H_\sigma$ meeting $C_V$ exactly once, which is a contradiction.  The same argument, reflected, implies that $\beta_W$ does not lie in a hanging--$B$ region if $C_W \neq C_V$.

So now suppose $C_V\neq C_W$ and $\beta_W$ lies in a standing--$A$ region.  Then $\zeta_1$ and $\zeta_2$ run adjacent to a nontrivial left--down staircase through $B$--regions.  Again, $\beta_V$ must also give a staircase through $B$ regions, so $\beta_V$ either lies in a hanging--$B$ or standing--$A$ region.  Again in either case we have orange faces adjacent to either side of $C_V$, giving a contradiction as above.  A reflection of this argument implies that $\beta_W$ does not lie in a standing--$B$ region if $C_V\neq C_W$.

Thus $C_V = C_W$.  We still know that $\beta_V$ and $\beta_W$ are either in hanging or standing regions, one on either side of $C:=C_W=C_V$.  To avoid having green or orange faces adjacent to both sides of $C$, we must have one of the combinations: hanging--$A$ with standing--$A$, hanging--$A$ with hanging--$B$, standing--$A$ with standing--$B$, and hanging--$B$ with standing--$B$.

In all cases, consider the point on $C$ where the segment of a hanging or standing region meets $C$.  In the hanging case, both $\zeta_1$ and $\zeta_2$ run adjacent to this point.  In the standing case, slide the point slightly to lie between $\zeta_1$ and $\zeta_2$ on $C$, and again we have $\zeta_1$ and $\zeta_2$ running adjacent to this point.
Then in all possible combinations, Lemma~\ref{lemma:AdjacentPoints} (Adjacent Points) implies that either there is a 2--edge loop, or $\zeta_1$ and $\zeta_2$ run parallel to $C$ between these points.  If there is a 2--edge on the same side of $C$ as a hanging region, then it must be of type~$\mathcal{F}$.  If there is a 2--edge loop on the same side of $C$ as a standing region, it must be of type~$\mathcal{B}$--elementary.  

So suppose that $\zeta_1$ and $\zeta_2$ run parallel to $C$ between the two points.  Consider the hanging--$A$ with standing--$A$ case, sketched with $C$ horizontal, with the hanging region on top.  Note that at the base of the segment on $C$ in the hanging region, the arc in the green face runs to the right adjacent to $C$.  However, on the opposite side, the arc in the orange face runs to the left.  This is a contradiction.  We obtain the same contradiction for hanging--$A$ with hanging--$B$, standing--$A$ with standing--$B$, and for hanging--$B$ with standing--$B$.  So in all cases, there must be a 2--edge loop.
\end{proof}

\begin{lemma}[Inside non-prime arcs]\label{lemma:InNonPrime}
Suppose $\beta_W$ and $\beta_V$ are separated by a non-prime arc $\alpha$, with the arc $\zeta_2$, say, crossing $\alpha$. Suppose $\alpha$ is outermost among all such arcs, with respect to $\beta_W$. That is, $\alpha$ is the first such non-prime arc crossed by $\zeta_2$ when directed toward $\beta_V$. Then we have the conclusion of Proposition~\ref{prop:EPDTypes}.
\end{lemma}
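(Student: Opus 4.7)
The plan is to reduce the problem to one of the previous lemmas applied inside the non-prime half--disk cut off by $\alpha$. Let $C_\alpha$ denote the state circle containing the endpoints of $\alpha$, and let $D$ be the non-prime half--disk bounded by $\alpha$ together with the portion of $C_\alpha$ on the side containing $\beta_V$. By the hypothesis that $\alpha$ separates $\beta_W$ from $\beta_V$ and is outermost with respect to $\beta_W$, we have $\beta_V\subset D$, $\beta_W\not\subset D$, and $\zeta_2$ crosses no separating non-prime arc before $\alpha$. The crossing of $\alpha$ by $\zeta_2$ is realized by a non-prime switch joining the two colored faces on either side of $\alpha$ into one shaded face, so after the switch $\zeta_2$ continues into $D$ along a tentacle in the appropriate colored region.

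First, I would show that we may assume $\zeta_2$ runs downstream into $D$ immediately after the non-prime switch. If instead $\zeta_2$ runs upstream inside $D$, the Shortcut Lemma~\ref{lemma:shortcut} ensures that if $\zeta_2$ ever exits $D$ across $C_\alpha$ it must do so downstream; but $\zeta_2$ terminates at $\beta_V\subset D$, so any upstream excursion must be followed by a downstream portion before reaching $\beta_V$. Replacing $\zeta_2$ by a suitable terminal subarc, and using the Utility Lemma~\ref{lemma:utility} to track the direction of $\zeta_1$ at any relevant crossings, reduces us to the case where $\zeta_2$ runs downstream into $D$ from the non-prime switch.

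Next, I would apply the machinery of Lemmas~\ref{lemma:StaircaseBetween} and \ref{lemma:FullStaircase} to the arcs $\zeta_1$, $\zeta_2$ inside $D$, with $\alpha$ playing the role of the outer boundary that $C_W$ played in those lemmas (in particular $C_\alpha$ plays the role of $C_W$, and the non-prime switch at $\alpha$ plays the role of the initial tentacle adjacent to $\beta_W$). This yields either (i) a 2--edge loop inside $D$ whose tentacles carry $\partial E'$, or (ii) a hanging or standing region at $\beta_V$ connected back toward $\alpha$ by a staircase in $H_\sigma$, in which case the argument of Lemma~\ref{lemma:TermStair} applies in the nested setting to produce a 2--edge loop. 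In either outcome the 2--edge loop lies inside $D$, hence is separated from $\beta_W$ by the non-prime arc $\alpha$. Depending on whether the tentacles through which $\partial E'$ runs on the loop are of opposite color or the same color, and on which side of the state circle containing the zig-zag vertex the arc $\alpha$ sits, the loop matches type $\mathcal{E}$, $\mathcal{F}$, or $\mathcal{G}$ of Figures~\ref{fig:EPDformE}--\ref{fig:EPDformG}.

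The main obstacle will be the case analysis that matches each resulting configuration with the correct type. Because the non-prime switch at $\alpha$ can connect a tentacle in an $A$--region on one side to a tentacle in a $B$--region on the other, or like regions on each side, the colors of $\zeta_1$ and $\zeta_2$ inside $D$ are determined jointly by both polyhedral regions and by the orientation of the switch. Working through these sub-cases, invariant under the $A\leftrightarrow B$ reflection employed in earlier lemmas, and verifying that each matches one of the three figures as described, completes the argument.
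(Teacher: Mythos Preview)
Your outline has a genuine structural gap. You propose to apply Lemmas~\ref{lemma:StaircaseBetween} and~\ref{lemma:FullStaircase} \emph{inside} the half--disk $D$, with $C_\alpha$ playing the role of $C_W$ and the non-prime switch at $\alpha$ playing the role of $\beta_W$. But those lemmas require that \emph{both} shaded arcs $\zeta_1$ and $\zeta_2$ emanate from the same white face $\beta_W$, which lies in a hanging or standing region of Figure~\ref{fig:lollipops}. Here only $\zeta_2$ crosses $\alpha$; the other arc $\zeta_1$ enters $D$ by crossing the state circle $C_\alpha$ at some other location, since $C_\alpha$ separates $\beta_W$ from $\beta_V$ while $\alpha$ does not lie in the shaded face containing $\zeta_1$. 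So there is no common ``starting point'' inside $D$ from which to launch the paired staircases of Lemma~\ref{lemma:StaircaseBetween}, and the induction of Lemma~\ref{lemma:FullStaircase}, which compares the $(k+1)$-st segments of $\zeta_1$ and $\zeta_2$ step by step, has no base case.

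Your preliminary reduction is also not right. After a non-prime switch there is no meaningful ``downstream into $D$'' crossing to normalize to: the switch itself does not cross any state circle, and the Shortcut Lemma you invoke concerns arcs that \emph{exit} the half--disk across $C_\alpha$, whereas $\zeta_2$ terminates at $\beta_V\subset D$ and never exits. The paper's proof does not attempt this reduction. Instead it splits directly on whether $\zeta_2$ runs upstream across some state circle $C'$ after crossing $\alpha$. If not, $\zeta_2$ connects straight to $\beta_V$, forcing $\beta_V$ into a standing region; one then analyzes the $\beta_W$ side (outside $D$) with Lemma~\ref{lemma:FullStaircase} and uses $\sigma$--homogeneity and primeness to finish. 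If $\zeta_2$ does run upstream across $C'$, one argues separately that $\zeta_1$ must also cross $C'$ (this is the step your outline never establishes), compares the two segments between $C_\alpha$ and $C'$, and obtains a 2--edge loop of type $\mathcal{E}$, $\mathcal{F}$, $\mathcal{G}$, $\mathcal{B}$, or $\mathcal{D}$ according to the configuration. The asymmetry between how $\zeta_1$ and $\zeta_2$ enter $D$ is exactly what forces this more hands-on argument rather than a wholesale transplant of the earlier lemmas.
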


\begin{proof}
As in the proof of \cite[Lemma 6.14]{fkp}, we break into two cases: whether $\zeta_2$ runs upstream after crossing $\alpha$ or not.

\underline{Case 1}.  Suppose $\zeta_2$ does not run upstream after crossing $\alpha$.  Let $C$ be the state circle on which the endpoints of $\alpha$ lie.  First, note that $\zeta_2$ cannot run downstream across $C$ after crossing $\alpha$, for since $\beta_V$ lies inside the non-prime half--disk bounded by $\alpha$ and $C$, if $\zeta_2$ crosses $C$ running downstream away from $\beta_V$, it must cross $C$ again to reach $\beta_V$.  This contradicts the Utility Lemma~\ref{lemma:utility}.

Since $\zeta_2$ does not run upstream or downstream after crossing $\alpha$, it must run to the vertex $\beta_V$.  We know that $\beta_V$ has one of the four forms in Figure \ref{fig:lollipops}. Since $\zeta_2$ does not run upstream or downstream, but both arcs in the hanging regions run upstream or downstream, $\beta_V$ must be as on the right of that figure, one of the two standing regions.  We will assume it is a standing--$A$ region, since the standing--$B$ argument is the same up to reflection.  Thus $\zeta_2$ lies in an orange face. 

Lemma~\ref{lemma:FullStaircase} (Full staircase) implies that on the opposite side of $\alpha$, either there is a 2--edge loop, or $\zeta_1$ and $\zeta_2$ run adjacent to a (possibly empty) staircase of $\beta_W$.  If the staircase of $\beta_W$ is non-empty, then it has one of the four forms given by that lemma.  To ensure the colors of faces match correctly, $\zeta_2$ must run through an orange face, and so the staircases must either be type~\eqref{item:A-lollipop}, standing--$A$ and left--down, or type~\eqref{item:B-rainbow}, hanging--$B$ and left--down.  But now note that a left--down staircase terminates in a $B$--region, at the state circle $C$, whereas we are assuming $\alpha$ has endpoints in an $A$--region, on the same side of the state circle $C$.  This contradicts our assumption of $\sigma$--homogeneity.  

Thus we reduce to the case that the staircase of $\beta_W$ is empty, or consists of no segments.  The same argument as above implies that $\beta_W$ does not lie in a hanging region.  Thus $\beta_W$ also lies in a standing region, and to ensure homogeneity, it must be a standing--$A$ region.  We assume $\zeta_2$ meets no state circles when running from $\beta_W$ to $\beta_V$, else we'll get a 2--edge loop (of type~$\mathcal{B}$--elementary) by Lemma~\ref{lemma:AdjacentLoop} (Adjacent loop).  Additionally, $\zeta_1$ crosses $C$ twice, and we may assume it meets no state circles between crossings, or again Lemma~\ref{lemma:AdjacentLoop} (Adjacent loop) implies that there is a 2--edge loop, this time of type~$\mathcal{D}$--elementary or $\mathcal{B}$--elementary.  Then $\zeta_1 \cup \zeta_2$ gives a closed curve in the graph $H_\sigma$ meeting $H_\sigma$ exactly twice.  This gives a closed curve meeting the diagram twice with crossings on either side (inside and outside the half--disk bounded by $\alpha$), contradicting the fact that the diagram is prime, Definition~\ref{def:prime}.

\underline{Case 2}.  The arc $\zeta_2$ runs upstream after crossing $\alpha$.

If $\zeta_2$ runs back to $C$ afterward, then there must be a 2--edge loop by Lemma~\ref{lemma:AdjacentLoop} (Adjacent loop), of type $\mathcal{F}$ in Figure~\ref{fig:EPDformF}.  So suppose $\zeta_2$ does not return to $C$ after running upstream, across a state circle we will label $C'$. 

We claim that $\zeta_1$ must also cross $C'$. 
If $C'$ separates $\beta_V$ and $\beta_W$, then both $\zeta_1$ and $\zeta_2$ must cross $C'$.  So suppose $C'$ does not separate $\beta_V$ and $\beta_W$.  Then $\zeta_2$ must cross $C'$ twice, the second time in the downstream direction, running toward $\beta_V$.  By assumption, $\zeta_2$ does not run back to $C$, hence it runs downstream to some $C''$.  The arc $\zeta_1$ crosses $C$ running downstream toward $\beta_V$, so it must also run downstream to $C''$, following tentacles to the same side of $C''$ as $\zeta_2$.  But this leads to a contradiction: such arcs cannot meet in a vertex, but will be separated by segments.
So we conclude that both $\zeta_1$ and $\zeta_2$ run along segments between $C$ and $C'$.
If these segments are distinct, then there is a 2--edge loop, of type~$\mathcal{E}$. 

If these segments are not distinct, then both $\zeta_1$ and $\zeta_2$ run adjacent to the endpoint $p$ of the segment on $C$.  They also both run adjacent to a point $p'$ on $C$ where an endpoint of $\alpha$ meets $C$.  Between those two points, $\zeta_2$ runs over $\alpha$, hence meets no segments or state circles.  If the arc $\zeta_1$ meets state circles between, then Lemma~\ref{lemma:AdjacentLoop} (Adjacent loop) implies there is a 2--edge loop.  This will either be of type~$\mathcal{B}$--general, of type~$\mathcal{D}$--general, or of type~$\mathcal{G}$, depending on whether $\zeta_1$ runs into an $A$ or $B$--region, and whether it runs over a non-prime arc.

If $\zeta_1$ meets no state circles in between $p$ and $p'$, then take the portions of $\zeta_1$ and $\zeta_2$ between these two points and connect their ends. This gives a loop meeting the diagram exactly twice, with crossings on either side, contradicting the fact that the diagram is prime.
\end{proof}

\begin{proof}[Proof of Proposition~\ref{prop:EPDTypes}]
Let $E$ be an essential product disk embedded in $P_\sigma$.  By Lemma~\ref{lemma:EPDtoSquare}, there exists a normal square $E'$ with each white side cutting off a triangle in such a way that in counter-clockwise order, edges of the triangle are colored white--orange--green.  As usual, label the white sides $\beta_V$ and $\beta_W$.  By Lemma~\ref{lemma:SameRegion}, if $\beta_V$ and $\beta_W$ are in the same polyhedral region, then there is a 2--edge loop in the upper polyhedron with one of the desired forms.  If $\beta_V$ and $\beta_W$ are not in the same region, then the sides of $\partial E'$ in the shaded faces, which we label $\zeta_1$ and $\zeta_2$, must run over a sequence of state circles to get from $\beta_W$ to $\beta_V$.  Lemma~\ref{lemma:TermStair} implies that if $\zeta_2$ crosses $C_W$ and runs out of every non-prime half disk until it crosses $C_V$, then the boundary of $E'$ runs over tentacles adjacent to segments of a 2--edge loop as desired.  If instead $\zeta_2$ runs into a non-prime half--disk without exiting, then Lemma~\ref{lemma:InNonPrime} implies that the boundary of $E'$ runs over tentacles adjacent to a 2--edge loop as desired.
\end{proof}

The proof of Theorem~\ref{thm:main} is now immediate.

\begin{proof}[Proof of Theorem~\ref{thm:main}]
Given an EPD $E$, pull it into a normal square $E'$ as in Lemma~\ref{lemma:EPDtoSquare}.  By Proposition~\ref{prop:EPDTypes}, the normal square has one of the forms of Figures~\ref{fig:EPDformA} through \ref{fig:EPDformG}.  In every case, the boundary of the square $E'$ runs through tentacles adjacent to segments of a 2--edge loop. 
We may slide the white sides $\beta_V$ and $\beta_W$ of $\partial E'$ back onto the ideal vertices to recover an EPD isotopic to $E$, and we may do this in such a way that $\partial E$ runs over the same 2--edge loop as $\partial E'$, concluding the proof of the theorem.
\end{proof}

\section{Volume applications}\label{sec:applications}

In this section, we apply of Theorem~\ref{thm:main} to bound volumes of classes of links in terms of their diagrams.  
First, we need some definitions.

\begin{define}\label{def:Twist}
A \emph{twist region} is a string of bigon regions in the complement of the diagram graph that are adjacent end to end, or a single crossing adjacent to no bigons.  We also require the diagram to be alternating in a twist region, else a Reidemeister move simplifies the diagram, and we require that a twist region be maximal in the sense that there are no additional bigons adjacent on either side.  An example is shown in Figure~\ref{fig:TwistShort}, left.
\end{define}

\begin{figure}
  \includegraphics{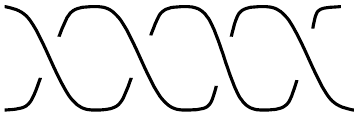} \hspace{.1in}
  \includegraphics[width=1.4in]{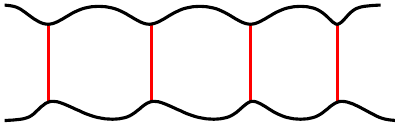} \hspace{.1in}
  \includegraphics[width=1.4in]{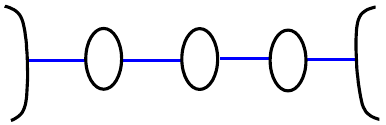}
  \caption{Left to right:  A twist region, its short resolution, and its long resolution.}
  \label{fig:TwistShort}
\end{figure}

Given a twist region with more than one crossing, an adequate state will choose either all $A$ or all $B$ resolutions for that twist region.  One choice will produce a portion of $H_\sigma$ consisting of two state circles, with segments parallel between them.  The other choice will produce a sequence of state circles from the interior of the twist region, each meeting only two segments.  We call the former the \emph{short} resolution, and the latter the \emph{long} resolution.  See Figure \ref{fig:TwistShort}.

The following definition is as in \cite{fkp:hyplinks}.

\begin{define}\label{def:2edgeloop}
Two edges of a 2--edge loop in $\GG_\sigma$ \emph{belong to the same twist region} $R$ of the diagram if the edges correspond to crossings of $R$, and the resolution of $R$ is the short one.  We say that $\GG_\sigma$ satisfies the \emph{2--edge loop condition} if every 2--edge loop in $\GG_\sigma$ belongs to the same twist region.  In this case, we also say that the diagram satisfies the 2--edge loop condition for $\sigma$.
\end{define}

Theorem~\ref{thm:main} gives us the following corollary, analagous to \cite[Corollary~5.19]{fkp}.

\begin{corollary}\label{cor:2edgeloop}
Suppose that $D(K)$ is a prime diagram with homogeneous, adequate state $\sigma$, and suppose that $\GG_\sigma$ satisfies the 2--edge loop condition.  Then $||E_c||=0$.
\end{corollary}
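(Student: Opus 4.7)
The strategy is to show by induction on a suitable complexity (say the number of non-bigon white faces enclosed) that every EPD in $P_\sigma$ parabolically compresses into a union of simple EPDs, all of which lie in $E_s$. Combined with the minimality of $E_c$, this forces $E_c = \emptyset$.

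Suppose for contradiction that $E \in E_c$ is a complex EPD. Apply Lemma~\ref{lemma:EPDtoSquare} to replace $E$ by a normal square $E'$, and apply Theorem~\ref{thm:main}: $\partial E'$ runs over tentacles adjacent to segments $s_1, s_2$ of a 2--edge loop $L$ in $\GG_\sigma$. By the 2--edge loop condition, $s_1$ and $s_2$ arise from two crossings of a single twist region in its \emph{short} resolution, so in $H_\sigma$ they are parallel segments joining the same pair of state circles $C_1, C_2$. The region of the plane between them then corresponds to a white bigon face $B$ of $P_\sigma$, and the simple EPD $E_B$ bounding $B$ lies in the spanning set $E_s$ by definition.

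Next, I would inspect the seven forms of $E'$ recorded in Proposition~\ref{prop:EPDTypes} and Figures~\ref{fig:EPDformA}--\ref{fig:EPDformG}. In each form, the shaded arcs of $\partial E'$ lie in the two shaded faces flanking $B$, so $\partial E$ is adjacent to both sides of some ideal vertex $v$ at a corner of $B$ (where an $s_i$ meets a $C_j$). The parabolic compression of $E$ at $v$ then yields two new EPDs: one is the simple disk $E_B \in E_s$, and the other, $E''$, encloses strictly fewer non-bigon white faces than $E$. Iterating---applying Theorem~\ref{thm:main} to $E''$, locating its corresponding short 2--edge loop, and compressing again---eventually decomposes $E$ into a union of simple disks all belonging to $E_s \subseteq E_s \cup E_c$. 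This contradicts the minimality of $E_c$, and hence $||E_c||=0$.

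The main obstacle is the figure-by-figure verification that the shaded arcs of $\partial E'$ really are adjacent to opposite sides of a common corner vertex of $B$, and that the compression yields $E_B$ plus a strictly simpler EPD. The elementary types $\mathcal{A}$ and $\mathcal{B}$ are the easiest, since $\partial E'$ already hugs the bigon tightly. The general cases containing a staircase, and types $\mathcal{E}$--$\mathcal{G}$ in which non-prime arcs separate the white sides of $E'$ from the 2--edge loop, require more careful bookkeeping to confirm that the compression vertex $v$ is available and that the leftover EPD $E''$ genuinely has lower complexity, so that the induction terminates.
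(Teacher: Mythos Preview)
Your approach is different from the paper's, and in its current form it has a real gap.

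The paper argues by modifying the \emph{diagram} rather than the EPD. It invokes \cite[Lemma~5.17]{fkp} (which extends to the homogeneously adequate setting): removing one crossing from a twist region in its short resolution does not change $||E_c||$. Iterating, one passes to a diagram $\bar D$ in which $\GG_\sigma$ has no 2--edge loops at all; Theorem~\ref{thm:main} then says the upper polyhedron for $\bar D$ contains no EPDs whatsoever, so $||E_c(\bar D)||=0$ and hence $||E_c(D)||=0$. No parabolic compressions and no case analysis of Figures~\ref{fig:EPDformA}--\ref{fig:EPDformG} are needed.

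Your compression scheme has two concrete problems. First, your induction measure does not decrease: if $E$ parabolically compresses into the bigon disk $E_B$ and a remainder $E''$, then $E''$ encloses exactly the same non-bigon white faces as $E$ did on that side---you have only removed a bigon---so the count of non-bigon faces is unchanged and the induction stalls. Second, and more seriously, the key claim that $\partial E$ runs adjacent to both sides of a corner vertex of $B$ is never established. Theorem~\ref{thm:main} only asserts that $\partial E'$ passes through tentacles adjacent to the segments $s_1,s_2$; in the general types carrying a staircase, and in types~$\mathcal{E}$--$\mathcal{G}$, the 2--edge loop sits at the far end of the staircase or across a non-prime arc from the white side, and nothing in the statement guarantees that $\partial E$ flanks a single zig-zag vertex of the bigon. (Note also that $s_1,s_2$ need not be \emph{adjacent} segments of the twist region, so the region between them may be a chain of bigons rather than one bigon $B$.) You correctly identify this verification as the ``main obstacle,'' but it is not a bookkeeping detail---it is precisely where the argument would require substantial new work, case by case. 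The paper's route sidesteps all of this by pushing the difficulty into the already-proved \cite[Lemma~5.17]{fkp}.
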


\begin{proof}
Recall from Definition~\ref{def:Ec} that $||E_c||$ is the number of complex EPDs in a minimal spanning set for the $I$--bundle of $S^3\cut S_\sigma$, from \cite[Lemma~5.8]{fkp}. Let $R$ be a twist region of $D:=D(K)$ whose resolution under $\sigma$ is the short resolution.  
In \cite[Lemma~5.17]{fkp}, which extends immediately to the homogeneously adequate case (see Section~5.6 of that paper), it is shown that if $\hat{D}$ is the diagram obtained by removing one crossing from $R$, then $E_c(D)$ and $E_c(\hat{D})$ have the same cardinality.

Now let $\bar{D}$ be the diagram obtained from $D$ by removing all bigons in the short resolutions of twist regions of $D$.  By induction, $||E_c(D)|| = ||E_c(\bar{D})||$. Moreover, because $\GG_\sigma$ satisfies the 2--edge loop condition, this will remove all 2--edge loops from $\GG_\sigma(\bar{D})$. 

But by Theorem~\ref{thm:main}, any EPD coming from $\bar{D}$ has boundary running over a 2--edge loop.  Since $\GG_\sigma(\bar{D})$ has no 2--edge loops, there must be no EPDs in $E_c(\bar{D})$.  Hence $||E_c||=0$.
\end{proof}


Suppose that $M$ is a 3--manifold and $S$ is an essential surface embedded in $M$.  Recall that the \emph{guts} of $M\cut S$ is defined to be the complement of the maximal $I$--bundle in $M\cut S$.  Work of Agol \cite{agol:guts}, extended by Kuessner \cite{kuessner:guts}, says that guts can be used to bound the Gromov norm of $M$.  In particular,
\begin{equation}\label{eqn:GromovBound}
||M|| \geq 2\,\chi_-(\rm{guts}(M\cut S)).
\end{equation}
This was extended by Agol, Storm, and Thurston to give bounds on the volumes of hyperbolic manifolds \cite{ast}.  If $M$ is hyperbolic, then
\begin{equation}\label{eqn:AST}
\vol(M) \geq v_8\,\chi_-(\rm{guts}(M\cut S)),
\end{equation}
where $v_8 = 3.6638\dots$ is the volume of a regular ideal octahedron.

In \cite{fkp}, it was shown that
\[\chi_-(\rm{guts}(S^3\cut S_\sigma)) = \chi_-(\GG'_\sigma) - ||E_c||,\]
where recall $\GG'_\sigma$ is the reduced state graph of Definition~\ref{def:StateGraph}.  
This is Theorem~5.14 of that paper, extended to homogeneously adequate links in \cite[Section~5.6]{fkp}.  Combining this with Corollary~\ref{cor:2edgeloop} above, we obtain the following corollary, analogous to \cite[Corollary~9.4]{fkp}.

\begin{corollary}\label{cor:2loopVolumes}
Suppose that $D(K)$ is a prime diagram of a hyperbolic link $K$, with homogeneous, adequate state $\sigma$, and suppose that $\GG_\sigma$ satisfies the 2--edge loop condition.  Then
\[ \vol(S^3-K) \geq v_8\,(\chi_-(\GG_\sigma')).\]
\end{corollary}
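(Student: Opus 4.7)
The proof should be a short chain of previously assembled results. My plan is as follows.

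First, I would invoke Corollary~\ref{cor:2edgeloop}, which applies exactly to our hypotheses (prime diagram, homogeneous adequate state $\sigma$, $\GG_\sigma$ satisfying the 2--edge loop condition), to conclude that $||E_c|| = 0$. This is the substantive input, since it is where Theorem~\ref{thm:main} (identification of EPDs with 2--edge loops) gets used.

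Next, I would plug this into the guts formula quoted just before the statement of the corollary, namely
\[
\chi_-(\mathrm{guts}(S^3 \cut S_\sigma)) \;=\; \chi_-(\GG_\sigma') - ||E_c||,
\]
which is \cite[Theorem~5.14]{fkp} as extended to the homogeneously adequate setting. With $||E_c||=0$ from the previous step, this becomes $\chi_-(\mathrm{guts}(S^3 \cut S_\sigma)) = \chi_-(\GG_\sigma')$.

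Finally, I would apply the Agol--Storm--Thurston volume bound \eqref{eqn:AST}, using the fact that $S^3 - K$ is hyperbolic by hypothesis and that $S_\sigma$ is an essential surface in $S^3-K$ (a fact used implicitly throughout \cite{fkp}, so that the guts make sense and the bound applies). This yields
\[
\vol(S^3-K) \;\geq\; v_8 \, \chi_-(\mathrm{guts}(S^3 \cut S_\sigma)) \;=\; v_8 \, \chi_-(\GG_\sigma'),
\]
which is exactly the claimed inequality.

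There is no real obstacle here: all the hard work has been done in Theorem~\ref{thm:main}, Corollary~\ref{cor:2edgeloop}, and the quoted guts and Agol--Storm--Thurston results. The only thing to be careful about is verifying that the hypothesis of hyperbolicity on $K$ is what allows us to pass from the Gromov norm bound \eqref{eqn:GromovBound} to the volume bound \eqref{eqn:AST}, and that the state surface $S_\sigma$ is essential so that the guts decomposition is well-defined --- both are provided by \cite{fkp} for homogeneously adequate diagrams, so the proof is effectively a one-line combination of the ingredients.
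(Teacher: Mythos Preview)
Your proposal is correct and follows essentially the same approach as the paper's own proof: invoke Corollary~\ref{cor:2edgeloop} to get $||E_c||=0$, feed this into the guts formula from \cite[Theorem~5.14]{fkp}, and then apply the Agol--Storm--Thurston inequality~\eqref{eqn:AST}. The paper's proof is the same one-line combination of these three ingredients, in the same order of dependence.
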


\begin{proof}
By Theorem~5.14 of \cite{fkp}, which applies to the homogeneously adequate case, $\chi_-(\guts(S^3\cut S_\sigma)) = \chi_-(\mathbb{G}'_\sigma) - ||E_c||$ where $||E_c||$ is the number of complex essential product disks in a spanning set for the $I$--bundle.  By Corollary~\ref{cor:2edgeloop} above, $||E_c||=0$.
Now equation \eqref{eqn:AST} above implies that the volume of $S^3-K$ is bounded below by $v_8\,\chi_-(\mathbb{G}'_\sigma)$.
\end{proof}



\bibliographystyle{amsplain}

\bibliography{biblio}

\end{document}